\documentclass[reqno,10pt,centertags]{amsart}
\usepackage{amsmath,amsthm,amscd,amssymb,latexsym,enumerate}
\usepackage{upref,esint,float,color}
\usepackage{centernot}
\usepackage{url} 

\usepackage{hyperref}

\newcommand*{\mailto}[1]{\href{mailto:#1}{\nolinkurl{#1}}}
\newcommand{\arxiv}[1]{\href{http://arxiv.org/abs/#1}{arXiv: #1}}


\makeatletter
\def\theequation{\@arabic\c@equation}

\newcommand{\bbN}{{\mathbb{N}}}
\newcommand{\bbR}{{\mathbb{R}}}

\newcommand{\bbZ}{{\mathbb{Z}}}
\newcommand{\bbC}{{\mathbb{C}}}

\newcommand{\cB}{{\mathcal B}}

\newcommand{\cH}{{\mathcal H}}

\newcommand{\no}{\nonumber}
\newcommand{\lb}{\label}
\newcommand{\bi}{\bibitem}
\newcommand{\f}{\frac}

\newcommand{\Oh}{O}

\newcommand{\dom}{\operatorname{dom}}

\renewcommand{\Re}{\operatorname{Re}}

\renewcommand{\ln}{\operatorname{ln}}
\newcommand{\tr}{\operatorname{tr}}
\DeclareMathOperator{\arcsinh}{arcsinh}
\DeclareMathOperator{\arcth}{arctanh}

\numberwithin{equation}{section}

\newtheorem{theorem}{Theorem}[section]
\newtheorem{lemma}[theorem]{Lemma}

\newtheorem{hypothesis}[theorem]{Hypothesis}

\theoremstyle{definition}

\newtheorem{remark}[theorem]{Remark}

\begin{document}

\title[Trace Formulas and the Riemann $\zeta$-Function]{Trace Formulas Applied to \\ the Riemann
$\zeta$-Function}

\author[M.\ S.\ Ashbaugh et al.]{Mark\ S.\ Ashbaugh, Fritz Gesztesy, Lotfi Hermi, Klaus Kirsten, \\ Lance Littlejohn,
and Hagop Tossounian}
\address{Department of Mathematics, University of
	Missouri, Columbia, MO 65211, USA}
\email{\mailto{ashbaughm@missouri.edu}}
\urladdr{\url{https://www.math.missouri.edu/people/ashbaugh}}

\address{Department of Mathematics,
Baylor University, One Bear Place \#97328,
Waco, TX 76798-7328, USA}
\email{\mailto{Fritz\_Gesztesy@baylor.edu}}
\urladdr{\url{http://www.baylor.edu/math/index.php?id=935340}}

\address{Department of Mathematics and Statistics, Florida International University,
11200 S.W. 8th Street, Miami, Florida 33199, USA}
\email{\mailto{lhermi@fiu.edu}}

\address{GCAP-CASPER, Department of Mathematics,
Baylor University, One Bear Place \#97328,
Waco, TX 76798-7328, USA}
\email{\mailto{Klaus\_Kirsten@baylor.edu}}
\urladdr{\url{http://www.baylor.edu/math/index.php?id=54012}}

\address{Department of Mathematics,
Baylor University, One Bear Place \#97328,
Waco, TX 76798-7328, USA}
\email{\mailto{Lance\_Littlejon@baylor.edu}}
\urladdr{\url{http://www.baylor.edu/math/index.php?id=53980}}

\address{Department of Mathematics,
Baylor University, One Bear Place \#97328,
Waco, TX 76798-7328, USA}
\email{\mailto{Hagop.Tossounian@gmail.com}}

\date{\today}
\thanks{K.K.\ was supported by the Baylor University Summer Sabbatical and Research Leave Program.}
\thanks{To appear in {\it Integrability, Supersymmetry and Coherent States. 
A volume in honour of Professor V\'eronique Hussin}, S.\ Kuru, J.\ Negro, and L.\ M.\ Nieto (eds.), CRM Series in Mathematical Physics, Springer.}
\subjclass[2010]{Primary: 11M06, 47A10; Secondary: 05A15, 47A75.}
\keywords{Dirichlet Laplacian, trace class operators, trace formulas,
Riemann zeta function.}

\begin{abstract}
We use a spectral theory perspective to reconsider properties of the Riemann zeta function.
In particular, new integral representations are derived and used to present its value
at odd positive integers.
\end{abstract}

\maketitle


\section{Introduction} \lb{s1}

Spectral zeta functions associated with eigenvalue problems of (partial) differential operators are of 
relevance in a wide array
of topics \cite{BKMM09, BCVZ96, Co85, Di55, Di61, DC76, EKP97, Ha77, El12, Ki02, Mi01, RS71, SC01}. 
As an example consider the Dirichlet boundary value problem
\begin{equation}
 - \Delta_D f =-f^{\prime\prime},  \quad  f(0) =f(1)=0,       \lb{1.1}
\end{equation}
where $- \Delta_D$ denotes the Dirichlet Laplacian in the Hilbert space $L^2((0,1); dx)$ (cf.\ \eqref{2.9a}), 
with purely discrete and simple spectrum, 
\begin{equation}
\sigma(- \Delta_D) = \{\lambda_k = (k\pi)^2\big\}_{k\in\bbN}.   \lb{1.3} 
\end{equation}
In particular, the spectral zeta function associated with $- \Delta_D$, 
\begin{equation}
\zeta (z; - \Delta_D) = \sum_{k \in \bbN} \lambda_k^{-z} = \pi^{-2 z} \zeta (2z), \quad \Re(z) > 1/2, 
\end{equation} 
is basically given by the Riemann zeta function
\begin{equation}
\zeta (z) = \sum_{k \in \bbN} k^{- z}, \quad \Re (z) >1.
\end{equation}
This elementary and well-known observation identifies the Riemann zeta function as a spectral zeta function and hence spectral theoretic techniques for their analysis can be applied to it. This is the perspective taken in this article. In Section \ref{s2} we briefly review representations for spectral zeta functions as derived in \cite{GK18} and we apply them to the zeta function of Riemann. New integral representations for the Riemann zeta function are found and the well-known properties, namely values at even negative and positive integers are easily reproduced. In addition, we derive new representations for the value of the Riemann zeta function at positive odd integers. Typical examples we derive are
\begin{align}
\begin{split} 
\zeta(z) = \sin(\pi z/2) \pi^{z-1} \int_0^{\infty} ds \, s^{-z} [\coth(s) - \coth_n(s)],& \\
\Re(z) \in (\max(1,2n), 2n+2), \; n \in \bbN_0,&      \lb{1.5}
\end{split} 
\end{align}
where
\begin{align}
\begin{split} 
\coth_0(z) &= \f{1}{z}, \quad z \in \bbC \backslash \{0\},   \\
\coth_n(z) &= \f{1}{z} +\sum_{k=1}^n \frac{2^{2k} B_{2k}} {(2k)!} z^{2k-1}, \quad 
z \in \bbC \backslash \{0\}, \; n \in \bbN, \lb{1.6}
\end{split} 
\end{align} 
with $B_{m}$ the Bernoulli numbers (cf.\ \eqref{A.30a}--\eqref{A.33}), implying  
\begin{align} 
\zeta(3) &= -\pi^2 \, \int_0^\infty \, ds \, s^{-3} [\coth (s) - (1/s) - (s/3)],    \\[3mm] 
\zeta(5) &= \pi^4 \, \int_0^\infty \, ds \, s^{-5} \big[\coth (s) - (1/s) - (s/3) + \big(s^3/45\big)\big],    
\\[3mm] 
\zeta(7) & = -\pi^6 \, \int_0^\infty \, ds \, s^{-7} \big[\coth (s) - (1/s) - (s/3) 
+\big(s^3/45\big) - \big(2s^5/945\big)\big],   \\
& \text{etc.}    \no
\end{align} 

Finally, Appendix \ref{sA} summarizes known results about the Riemann zeta function putting 
the results we found in some perspective.

\section{Computing Traces and  the Riemann $\zeta$-Function} \lb{s2}

After a brief discussion of spectral zeta functions associated with self-adjoint operators with purely discrete spectra, 
we turn to applications of spectral trace formulas to the Riemann zeta function. 

We start by following the recent paper \cite{GK18} and briefly discuss spectral $\zeta$-functions of self-adjoint 
operators $S$ with a trace class resolvent (and hence a purely discrete spectrum).

Below we will employ the following notational conventions: A separable, complex Hilbert space is denoted 
by $\cH$, $I_{\cH}$ represents the identity operator in $\cH$; the resolvent set and spectrum of a closed operator 
$T$ in $\cH$ are abbreviated by $\rho(T)$ and $\sigma(T)$, respectively; the Banach space of trace class operators 
on $\cH$ is denoted by $\cB_1(\cH)$, and the trace of a trace class operator $A \in \cB_1(\cH)$ is abbreviated by 
$\tr_{\cH}(A)$. 

\begin{hypothesis} \lb{h2.1}
Suppose $S$ is a self-adjoint operator in $\cH$, bounded from below, satisfying
\begin{equation}
(S- z I_{\cH})^{-1} \in \cB_1(\cH)
\end{equation}
for some $($and hence for all\,$)$ $z \in \rho(S)$.
We denote the spectrum of $S$ by $\sigma(S) = \{\lambda_j\}_{j \in J}$ $($with $J \subset \bbZ$ an appropriate index set\,$)$, with every eigenvalue repeated according to its multiplicity.
\end{hypothesis}

Given Hypothesis \ref{h2.1}, the spectral
zeta function of $S$ is then defined by
\begin{equation}
\zeta (z; S) = \sum_{\substack{j \in J \\ \lambda_j \neq 0}} \lambda_j^{-z}    \lb{2.2}
\end{equation}
for $\Re(z) > 0$ sufficiently large such that \eqref{2.2} converges absolutely.

Next, let $P(0; S)$ be the spectral projection of $S$ corresponding to
the eigenvalue $0$ and denote by $m(\lambda_0; S)$ the multiplicity of the eigenvalue $\lambda_0$ of $S$, in particular,
\begin{equation}
m(0; S) = \dim(\ker(S)).
\end{equation}
In addition, we introduce the simple contour $\gamma$ encircling $\sigma(S) \backslash \{0\}$  in a
counterclockwise manner so as to dip under (and hence avoid) the point $0$ (cf.\ Figure \ref{fig1}). In fact, following \cite{KM04} (see also \cite{KM03}), we will henceforth choose as the branch cut of $w^{-z}$ the ray
\begin{equation}
R_{\theta} = \big\{w = t e^{i \theta} \big| t \in [0,\infty)\big\} \quad
\theta \in (\pi/2, \pi),  \lb{2.4}
\end{equation}
and note that the contour $\gamma$ avoids any contact with $R_{\theta}$
(cf.\ Figure \ref{fig1}). 
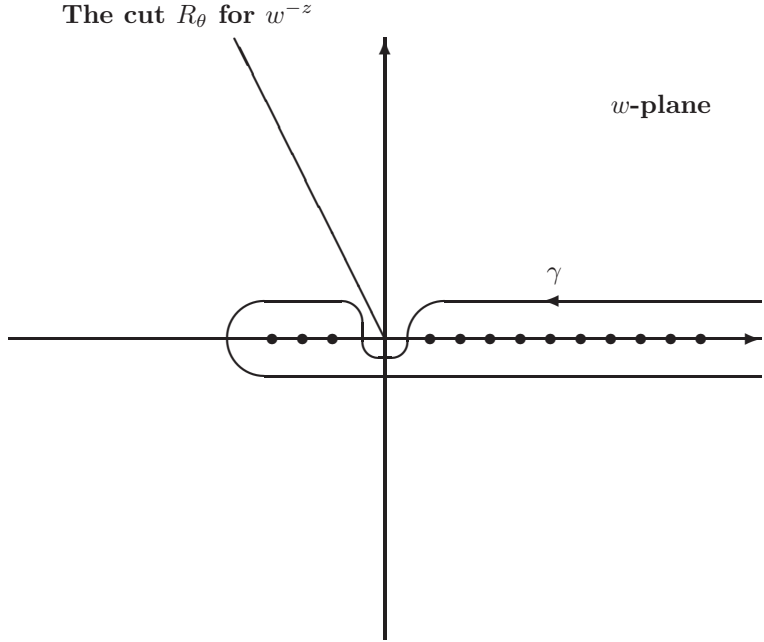
\begin{figure}[H]
\setlength{\unitlength}{1cm}

\begin{center}

\begin{picture}(20,10)(0,0)

\put(0,0){\setlength{\unitlength}{1.0cm}
\begin{picture}(10,7.5)
\thicklines

\put(0,4){\vector(1,0){10}} \put(5.0,0){\vector(0,1){8}}
\put(10.0,4){\oval(9.4,1)[tl]} \put(5,4){\oval(0.6,0.5)[b]}
\put(4.4,4){\oval(0.6,1)[tr]} \put(4.4,4){\oval(3.0,1)[l]}
\put(7.5,4.5){\vector(-1,0){.4}}\put(4.4,3.5){\line(1,0){5.6}}
\put(5.0,4.0){\line(-1,2){2.0}} \put(.7,8.2){{\bf The cut $R_{\theta}$ for
$w^{-z}$}} \multiput(5.6,4)(.4,0){10}{\circle*{.15}}
\multiput(3.5,4)(.4,0){3}{\circle*{.15}}
\put(8.0,7){{\bf $w$-plane}}
\put(7.15,4.8){{\bf $\gamma$}}

\end{picture}}

\end{picture}

\caption{Contour $\gamma$ in the complex $w$-plane.} \lb{fig1}

\end{center}

\end{figure}

\begin{lemma} \lb{l2.2}
In addition to Hypothesis \ref{h2.1} and the counterclockwise oriented contour $\gamma$ just described $($cf.\ Figure \ref{fig1}$)$, suppose that
$\big|{\tr}_{\cH}\big((S- z I_{\cH})^{-1} [I_{\cH} - P(0; S)]\big)\big|$
is polynomially bounded with respect to $z$ on $\gamma$. Then
\begin{equation}
\zeta(z; S) = - (2 \pi i)^{-1}  \ointctrclockwise_{\gamma} dw \, w^{-z}
\big[\tr_{\cH}\big((S - w I_{\cH})^{-1}\big) + w^{-1} m(0; S)\big]   \lb{2.6}
\end{equation}
for $\Re(z) > 0$ sufficiently large.
\end{lemma}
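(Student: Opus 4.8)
The plan is to obtain \eqref{2.6} as a contour-integral representation of the eigenvalue sum \eqref{2.2}, using the standard device of writing each power $\lambda_j^{-z}$ as a Cauchy-type integral and then resumming. First I would recall that for $\Re(z)>0$ and the branch cut $R_\theta$ fixed in \eqref{2.4}, one has for any $\lambda \in \sigma(S)\setminus\{0\}$ the identity
\begin{equation}
\lambda^{-z} = - (2\pi i)^{-1} \ointctrclockwise_{\gamma} dw \, w^{-z} (\lambda - w)^{-1},
\end{equation}
since the contour $\gamma$ encircles $\lambda$ once counterclockwise while staying clear of the cut; this is just the residue theorem applied to the meromorphic integrand $w^{-z}(\lambda-w)^{-1}$, whose only pole inside $\gamma$ is at $w=\lambda$ with residue $-\lambda^{-z}$. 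For $\lambda=0$ (if $0\in\sigma(S)$) the contour dips \emph{under} the origin, so $0$ is not enclosed; this is exactly why the eigenvalue $0$ must be excluded from \eqref{2.2}, and it is also the reason for the explicit compensating term $w^{-1} m(0;S)$ appearing in \eqref{2.6}: writing $(S-wI_\cH)^{-1} = (S-wI_\cH)^{-1}[I_\cH - P(0;S)] - w^{-1} P(0;S)$ on $\ker(S)$, the bracket in \eqref{2.6} equals $\tr_\cH\big((S-wI_\cH)^{-1}[I_\cH-P(0;S)]\big)$, which is the piece of the resolvent trace that ``sees'' only the nonzero spectrum.

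Next I would use Hypothesis \ref{h2.1} to justify the interchange of summation and integration. Since $(S-wI_\cH)^{-1}$ is trace class, by the spectral theorem $\tr_\cH\big((S-wI_\cH)^{-1}[I_\cH-P(0;S)]\big) = \sum_{j\in J,\,\lambda_j\neq 0} (\lambda_j - w)^{-1}$ with the series converging absolutely and uniformly for $w$ on the compact-in-the-plane-minus-spectrum set $\gamma$ (note $\gamma$ avoids $\sigma(S)$, and $\sum_j |\lambda_j|^{-1}<\infty$ is not quite enough, but $\sum_j |\lambda_j - w|^{-1} < \infty$ uniformly on $\gamma$ follows from trace-class-ness together with $\mathrm{dist}(\gamma,\sigma(S))>0$). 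Multiplying by $w^{-z}$, which is bounded on $\gamma$, and integrating term-by-term over $\gamma$, the interchange is legitimate and yields
\begin{equation}
- (2\pi i)^{-1} \ointctrclockwise_{\gamma} dw \, w^{-z} \sum_{\substack{j\in J\\ \lambda_j\neq 0}} (\lambda_j - w)^{-1}
= \sum_{\substack{j\in J\\ \lambda_j\neq 0}} \lambda_j^{-z} = \zeta(z;S),
\end{equation}
provided $\Re(z)$ is large enough that the left-hand sum converges absolutely — which is the same condition under which \eqref{2.2} is defined.

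The main technical obstacle is not the algebra but the convergence bookkeeping, specifically ensuring that the contour integral is itself well defined and that the term-by-term integration is valid \emph{uniformly} — this is precisely where the polynomial-boundedness hypothesis on $\big|\tr_\cH\big((S-zI_\cH)^{-1}[I_\cH-P(0;S)]\big)\big|$ enters. Although $\gamma$ is an unbounded contour (it runs out to infinity on both sides of the cut), the factor $w^{-z}$ decays like $|w|^{-\Re(z)}$ while the resolvent trace grows at most polynomially, so for $\Re(z)$ sufficiently large the integrand is absolutely integrable on $\gamma$ and the tail contributions are controlled. I would therefore close the argument by: (i) fixing $\Re(z)$ large enough to guarantee absolute convergence of both \eqref{2.2} and the contour integral, (ii) truncating $\gamma$ to a large arc, applying the finite-sum residue computation and the interchange on the truncation, and (iii) letting the truncation parameter tend to infinity, using the polynomial bound to show the discarded pieces vanish. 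This gives \eqref{2.6} for $\Re(z)$ sufficiently large, as claimed; analytic continuation in $z$ is not needed for the statement as stated but would be the natural next remark.
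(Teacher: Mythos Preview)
The paper does not supply a proof of this lemma; it is stated as background, with the surrounding discussion pointing to \cite{GK18} for the details. Your argument is the standard one and is essentially correct: rewrite the bracket in \eqref{2.6} as $\tr_\cH\big((S-wI_\cH)^{-1}[I_\cH-P(0;S)]\big)=\sum_{\lambda_j\neq 0}(\lambda_j-w)^{-1}$ via the operator identity $(S-wI_\cH)^{-1}P(0;S)=-w^{-1}P(0;S)$ (your parenthetical ``on $\ker(S)$'' should read ``as operators on $\cH$''; it is an identity of bounded operators, not merely on the kernel), and then pick off each $\lambda_j^{-z}$ as a residue, with the polynomial bound and the decay $|w^{-z}|=|w|^{-\Re(z)}$ ensuring absolute convergence of the contour integral for $\Re(z)$ large.

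One point to tighten in your step~(iii): the polynomial-boundedness hypothesis is assumed only \emph{on} $\gamma$, not on whatever arcs or vertical segments you use to close the truncated contour, so those closing pieces are not automatically controlled by the stated hypothesis. You can repair this either by choosing the closing pieces to be short vertical segments of fixed length $O(1)$ (placed between consecutive eigenvalues) on which $|w^{-z}|\to 0$ while the resolvent trace stays finite, or---more cleanly---by bypassing truncation altogether and applying dominated convergence directly to the term-by-term sum $\int_\gamma w^{-z}\sum_j(\lambda_j-w)^{-1}\,dw$; the latter route requires a bound on $\sum_j|\lambda_j-w|^{-1}$ (the trace \emph{norm}) along $\gamma$, which is slightly stronger than the hypothesis on $|\tr|$ but follows from the same trace-class assumption together with $\mathrm{dist}(\gamma,\sigma(S))>0$.
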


We note in passing that one could also use a semigroup approach via
\begin{align}
\begin{split}
\zeta(z; S) &= \Gamma(z)^{-1} \int_0^{\infty} dt \, t^{z - 1} \tr_{\cH}\big(e^{- t S}
[I_{\cH} - P(0; S)]\big)   \lb{2.12} \\
&= \Gamma(z)^{-1} \int_0^{\infty} dt \, t^{z - 1} \big[\tr_{\cH}\big(e^{- t S}\big) - m(0; S)\big],
\end{split}
\end{align}
for $\Re(z) > 0$ sufficiently large.

It is natural to continue the computation leading to \eqref{2.6}
and now deform the contour $\gamma$ so as to ``hug'' the branch cut $R_{\theta}$, but this requires the right asymptotic behavior of $\tr_{\cH}\big((S - w I_{\cH})^{-1} [I_{\cH} - P(0; S)]\big)$ as
$|w| \to \infty$ as well as $|w| \to 0$. This applies, in particular, to cases where $S$ is strictly positive and one thus chooses the branch cut along the negative axis, that is, employs the cut $R_{\pi}$, where
\begin{equation}
R_{\pi} = (- \infty, 0],  \lb{2.7}
\end{equation}
and choosing the contour $\gamma$ to encircle $R_{\pi}$ clockwise. This renders \eqref{2.6} into the 
following expression 
\begin{align}
\zeta(z;S) &= \f{\sin(\pi z)}{\pi} \int_0^{\infty} dt \, t^{-z} \tr_{\cH}\big((S  + t I_{\cH})^{-1}\big)   \no \\
&= \tr_{\cH}\bigg(\f{\sin(\pi z)}{\pi} \int_0^{\infty} dt \, t^{-z} (S  + t I_{\cH})^{-1}\bigg)   \no \\
&= \tr_{\cH} \big(S^{-z}\big),    \lb{2.8}
\end{align}
employing the fact,
\begin{equation}
S^{-z} = \f{\sin(\pi z)}{\pi} \int_0^{\infty} dt \, t^{-z} (S  + t I_{\cH})^{-1}, \quad \Re(z) \in (0,1),    \lb{2.9}
\end{equation}
whenever $S \geq 0$ in $\cH$, with $\ker(S) = \{0\}$ (see, e.g., \cite[Proposition~3.2.1\,d)]{Ha06}). \\[1mm]
\noindent
{\bf Note.} While \eqref{2.9} is rigorous, the manipulations in \eqref{2.8} are formal and subject to appropriate convergence and trace class hypotheses which will affect the possible range of $\Re(z)$. \hfill $\diamond$

These hypotheses are easily shown to be satisfied when discussing the one-dimensional Dirichlet Laplacian
$- \Delta_D$ in $L^2((0,1); dx)$, 
\begin{align}
& - \Delta_D = - \f{d^2}{dx^2},   \no \\
& \, \dom(- \Delta_D) = \big\{u \in L^2((0,1); dx) \, \big| \, u, u' \in AC_{loc}([0,1]); \, u(0) = 0 = u(1);    \lb{2.9a} \\
& \hspace*{8.2cm} u'' \in L^2((0,1); dx)\big\}   \no
\end{align}
(here $AC([0,1])$ denotes the set of absolutely continuous functions on $[0,1]$).

Recalling Riemann's celebrated zeta function (see Appendix \ref{sA} for more details),
\begin{equation}
\zeta(z) = \sum_{k \in \bbN} k^{-z}, \quad z \in \bbC, \, \Re(z) > 1.     \lb{2.9b}\\
\end{equation}
we start with the following result. 

\begin{lemma} \lb{l2.3}
Let $\Re(z) \in (1,2)$, then
\begin{equation}
\zeta(z) = \sin(\pi z/2) \pi^{z-1} \int_0^{\infty} ds \, s^{-z - 1} [s \coth(s) - 1].        \lb{2.9c}
\end{equation}
\end{lemma}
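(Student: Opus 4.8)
The plan is to apply the resolvent trace formula \eqref{2.8} to the one-dimensional Dirichlet Laplacian $S = -\Delta_D$ in $L^2((0,1); dx)$ of \eqref{2.9a}, for which, as noted right after \eqref{2.9a}, the hypotheses behind \eqref{2.8} are satisfied: $-\Delta_D$ is strictly positive with $\ker(-\Delta_D) = \{0\}$ and has trace class resolvent by \eqref{1.3}, while its resolvent trace
\begin{equation*}
\tr_{L^2((0,1); dx)}\big((-\Delta_D + t I)^{-1}\big) = \sum_{k \in \bbN} \f{1}{(k\pi)^2 + t}
\end{equation*}
stays bounded as $t \downarrow 0$ and is $O\big(t^{-1/2}\big)$ as $t \to \infty$, so that \eqref{2.8} holds for the spectral parameter, call it $w$, in the strip $\Re(w) \in (1/2, 1)$. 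Combining \eqref{2.8} with the elementary identity $\zeta(w; -\Delta_D) = \pi^{-2 w} \zeta(2 w)$ from Section \ref{s1} gives
\begin{equation*}
\pi^{-2 w} \zeta(2 w) = \f{\sin(\pi w)}{\pi} \int_0^\infty dt \, t^{-w} \sum_{k \in \bbN} \f{1}{(k\pi)^2 + t}, \quad \Re(w) \in (1/2, 1).
\end{equation*}

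Next I would put the resolvent trace in closed form using the classical partial fraction expansion of the hyperbolic cotangent,
\begin{equation*}
\coth(\tau) = \f{1}{\tau} + \sum_{k \in \bbN} \f{2 \tau}{\tau^2 + k^2 \pi^2}, \quad \tau \in \bbC \backslash i \pi \bbZ,
\end{equation*}
taken at $\tau = \sqrt{t}$, which yields $\sum_{k \in \bbN} \big((k\pi)^2 + t\big)^{-1} = \big[\sqrt{t} \coth\big(\sqrt{t}\,\big) - 1\big] \big/ (2 t)$. Substituting this and changing variables $s = \sqrt{t}$ (so $dt = 2 s \, ds$, hence $t^{-w} (2 t)^{-1} dt = s^{-2 w - 1} ds$, and the bracket $\big[\sqrt{t} \coth\big(\sqrt{t}\,\big) - 1\big]$ becomes $[s \coth(s) - 1]$) turns the right-hand side into $\pi^{-1} \sin(\pi w) \int_0^\infty ds \, s^{-2 w - 1} [s \coth(s) - 1]$. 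Multiplying through by $\pi^{2 w}$ and renaming $z = 2 w$ produces exactly \eqref{2.9c}.

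Finally one pins down the range of $z$: the rescaling $w = z/2$ maps $\Re(w) \in (1/2, 1)$ onto $\Re(z) \in (1,2)$, and, independently, the integral in \eqref{2.9c} converges precisely there, since $s \coth(s) - 1 = O\big(s^2\big)$ as $s \downarrow 0$ makes the integrand $O\big(s^{1 - \Re(z)}\big)$ near $0$ (integrable iff $\Re(z) < 2$), while $s \coth(s) - 1 \sim s$ as $s \to \infty$ makes it $O\big(s^{-\Re(z)}\big)$ there (integrable iff $\Re(z) > 1$).

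I do not expect a real obstacle here; the only delicate point is the bookkeeping of the admissible strip of the parameter as one passes through \eqref{2.8}, the substitution $s = \sqrt{t}$, and the rescaling $z = 2 w$. As an aside, \eqref{2.9c} can also be proved directly, bypassing the operator-theoretic machinery, by writing $s \coth(s) - 1 = \sum_{k \in \bbN} 2 s^2 \big/ \big(s^2 + k^2 \pi^2\big)$, interchanging sum and integral for $\Re(z) \in (1,2)$, and evaluating each resulting term with the beta integral $\int_0^\infty du \, u^{\mu - 1} \big(1 + u^2\big)^{-1} = \pi \big/ \big[2 \sin(\pi \mu / 2)\big]$ for $0 < \Re(\mu) < 2$, applied with $\mu = 2 - z$; but this route is less in the spirit of the present article.
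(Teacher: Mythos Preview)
Your proof is correct and follows the same overall architecture as the paper: apply \eqref{2.8} to $S=-\Delta_D$, identify $\zeta(w;-\Delta_D)=\pi^{-2w}\zeta(2w)$, put the resolvent trace in the closed form $(2t)^{-1}\big[t^{1/2}\coth\big(t^{1/2}\big)-1\big]$, and then substitute $t=s^2$. The one genuine difference is in how that closed form is obtained. The paper computes the trace by integrating the diagonal of the resolvent kernel (the Dirichlet Green's function $z^{-1/2}[\sin(z^{1/2})]^{-1}\sin(z^{1/2}x)\sin(z^{1/2}(1-x'))$) and invoking an antiderivative from Gradshteyn--Ryzhik; you instead write the trace as the spectral sum $\sum_{k\in\bbN}\big((k\pi)^2+t\big)^{-1}$ and close it via the Mittag--Leffler expansion of $\coth$. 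Your route is shorter and more elementary, and it makes the aside at the end (the direct beta-integral proof) a natural continuation; the paper's route, on the other hand, illustrates the Green's function machinery that generalizes to Sturm--Liouville problems where the eigenvalues are not known explicitly.
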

\begin{proof}
Since the eigenvalue problem for $- \Delta_D$ reads
\begin{equation}
- \Delta_D u_k = \lambda_k u_k, \quad  k \in \bbN,
\end{equation}
with
\begin{equation}
u_k(x) = 2^{1/2} \sin(k \pi x), \; \|u_k\|_{L^2((0,1);dx)} = 1, \quad \lambda_k = (\pi k)^2, \quad k \in \bbN,
\end{equation}
and all eigenvalues $\lambda_k$, $k \in \bbN$, are simple, \eqref{2.8} works as long as $\Re(z) \in ((1/2),1)$ 
and one obtains (see also \cite[p.~94]{Di61})
\begin{align}
& \zeta(z; - \Delta_D) = \tr_{L^2((0,1); dx)} \big((- \Delta_D)^{-z}\big) = \sum_{k \in \bbN} (\pi k)^{- 2 z}
= \pi^{-2z} \zeta(2 z)   \no \\
&\quad = \f{\sin(\pi z)}{\pi} \int_0^{\infty} dt \, t^{-z}
\tr_{L^2((0,1); dx)}\big((- \Delta_D + t I_{L^2((0,1); dx)})^{-1}\big)   \no \\
&\quad = \f{\sin(\pi z)}{\pi} \int_0^{\infty} dt \, t^{-z} \int_0^1 dx \, t^{-1/2} \big[\sinh(t^{1/2}\big)\big]^{-1}
\sinh\big(t^{1/2}x\big) \sinh\big(t^{1/2}(1-x)\big)    \no \\
&\quad =  \f{\sin(\pi z)}{2 \pi} \int_0^{\infty} dt \, t^{- z - 1} \big[t^{1/2} \coth \big(t^{1/2}\big) - 1\big],
\quad  \Re(z) \in ((1/2),1).   \lb{2.10}
\end{align}
Here we used
\begin{align}
\begin{split}
& (- \Delta_D - z I_{L^2((0,1); dx)})^{-1}(z,x,x')    \\
& \quad = \f{1}{z^{1/2}\sin(z^{1/2})} \begin{cases}
\sin(z^{1/2}x) \sin(z^{1/2}(1-x')), & 0 \leq x \leq x' \leq 1, \\
\sin(z^{1/2}x') \sin(z^{1/2}(1-x)), & 0 \leq x' \leq x \leq 1,
\end{cases}   \\
& \hspace*{7.3cm} z \in \bbC \big \backslash \big\{\pi^2 k^2\big\}_{k \in \bbN},
\end{split}
\end{align}
and \cite[2.4254]{GR80}
\begin{align}
\begin{split}
& \int^x dt \, \sinh(at + b) \sinh(at + c)   \\
& \quad = - (t/2) \cosh(b - c) + (4a)^{-1} \sinh(2 a t + b + c) + C.
\end{split}
\end{align}
Since
\begin{equation}
\big[t^{1/2} \coth\big(t^{1/2}\big) - 1\big] = \begin{cases} \Oh(t), & t \downarrow 0, \\
\Oh\big(t^{1/2}\big), & t \uparrow \infty,
\end{cases}
\end{equation}
\eqref{2.10} is well-defined for $ \Re(z) \in ((1/2),1)$. Thus, the elementary change of variables $t = s^2$ yields
\eqref{2.9c}.
\end{proof}

\begin{remark} \lb{r2.4}
Representation (\ref{2.9c}) is suitable to observe the well-known properties 
\begin{equation} 
\zeta (0) = - 1/2, \quad \zeta (-2n) =0, \; n\in \bbN.   \lb{2.13}
\end{equation}
To this end, one notes that the restriction $\Re (z) >1$ results from the $s \to \infty$ behavior of the 
integrand in \eqref{2.9c}. Explicitly, one has
\begin{equation} 
s \coth (s )-1 \underset{s \to \infty}{=} s-1 + \Oh\big( e^{-2s} \big), 
\end{equation} 
from which one infers that 
\begin{align}
\zeta (z) &= \sin (\pi z /2) \pi^{z-1} \int\limits_1^\infty ds \, s^{-z-1} (s-1) + E(z) \nonumber\\
&= \sin (\pi z/2) \pi^{z-1} \left[ \frac 1 {z-1} - \frac 1 z \right] + E(z) , 
\end{align}
where $E(\, \cdot \,)$ is entire and 
\begin{equation}
E(- 2 n) =0, \quad n\in \bbN_0. 
\end{equation} 
This immediately implies \eqref{2.13}.

The values at positive even integers, $\zeta (2m)$ for $m\in\bbN$, are best obtained using representation
(\ref{2.6}); see Remark \ref{r2.6}. \hfill $\diamond$
\end{remark}

One can generalize (\ref{2.9}) as follows:
\begin{align}
\begin{split}
S^{-z} = \f{\Gamma(m)}{\Gamma(n - z)\Gamma(m - n + z)} S^{m - n} \int_0^{\infty} dt \,
t^{n - 1 - z} (S + t I_{\cH})^{-m},&    \\
\Re(z) \in (n - m, n), \; m, n \in \bbN.&     \lb{2.14}
\end{split}
\end{align}
Formally, this now yields
\begin{align}
\zeta(z;S) &= \f{\Gamma(m)}{\Gamma(n -z)\Gamma(m - n + z)} \int_0^{\infty} dt \, t^{n - 1 -z}
\tr_{\cH}\big(S^{m - n}(S  + t I_{\cH})^{-m}\big)   \no \\
&= \tr_{\cH}\bigg(\f{\Gamma(m)}{\Gamma(n -z)\Gamma(m - n + z)} S^{m - n}
\int_0^{\infty} dt \, t^{n - 1 -z} (S  + t I_{\cH})^{-m}\bigg)   \no \\
&= \tr_{\cH} \big(S^{-z}\big).   \lb{2.15}
\end{align}
The case $m = n$ appears to be the simplest and yields
\begin{align}
\zeta(z;S) &= \f{\Gamma(n)}{\Gamma(n -z)\Gamma(z)} \int_0^{\infty} dt \, t^{n - 1 -z}
\tr_{\cH}\big((S  + t I_{\cH})^{-n}\big)   \no \\
&= \tr_{\cH}\bigg(\f{\Gamma(n)}{\Gamma(n -z)\Gamma(z)}
\int_0^{\infty} dt \, t^{n - 1 -z} (S  + t I_{\cH})^{-n}\bigg)   \no \\
&= \tr_{\cH} \big(S^{-z}\big).   \lb{2.16}
\end{align}
{\bf Note.} Again, \eqref{2.14} is rigorous, but \eqref{2.15}, \eqref{2.16} are subject to
``appropriate'' convergence and trace class hypotheses. \hfill $\diamond$

For $- \Delta_D$, \eqref{2.16} indeed works for $n \in \bbN$ as long as $\Re(z) \in ((1/2),n)$ and one obtains
\begin{align}
\begin{split}
& \zeta(z; - \Delta_D) = \tr_{L^2((0,1); dx)} \big((- \Delta_D)^{-z}\big) = \sum_{k \in \bbN} (\pi k)^{- 2 z}
= \pi^{-2z} \zeta(2 z)    \\
&\quad = \f{\Gamma(n)}{\Gamma(n - z) \Gamma(z)} \int_0^{\infty} dt \, t^{n - 1- z}
\tr_{L^2((0,1); dx)}\big((- \Delta_D + t I_{L^2((0,1); dx)})^{-n}\big).    \lb{2.17}
\end{split}
\end{align}
For $n = 2$ this includes $z = 3/2$ and hence leads to a formula for $\zeta(3)$. However, we prefer an
alternative approach based on \cite[Theorem~3.4\,$(i)$]{GK18} that applies to $- \Delta_D$ and yields
the following results:

\begin{lemma} \lb{l2.5}
Let $\Re(z) \in (1,4)$, then
\begin{equation}
\zeta(z) = \f{\sin(\pi z/2)}{(2 - z)} \pi^{z-1} \int_0^{\infty} ds \, s^{-z - 1}
\Big[s \coth(s) + s^2 [\sinh(s)]^{-2} - 2\Big].   \lb{2.24} \\
\end{equation}
In particular,
\begin{align}
\zeta(2) &= \f{\pi^2}{2} \int_0^{\infty} ds \, s^{- 3}
\Big[s \coth(s) + s^2 [\sinh(s)]^{-2} - 2\Big] = .... = \pi^2/6,      \lb{2.25} \\
\zeta(3) &= \pi^{2} \int_0^{\infty} ds \, s^{- 4}
\Big[s \coth(s) + s^2 [\sinh(s)]^{-2} - 2\Big].     \lb{2.26}
\end{align}
\end{lemma}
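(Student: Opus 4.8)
The plan is to mimic the proof of Lemma \ref{l2.3}, but to start from the $m=n=2$ case of the generalized representation \eqref{2.16}--\eqref{2.17} (equivalently, from the cited result \cite[Theorem~3.4\,$(i)$]{GK18}) rather than from \eqref{2.8}. First I would record that, for $-\Delta_D$, one has $\zeta(z;-\Delta_D)=\pi^{-2z}\zeta(2z)$ and that \eqref{2.17} with $n=2$ is valid for $\Re(z)\in((1/2),2)$, so that
\begin{equation}
\pi^{-2z}\zeta(2z) = \f{1}{\Gamma(2-z)\Gamma(z)} \int_0^{\infty} dt\, t^{1-z}\,
\tr_{L^2((0,1);dx)}\big((-\Delta_D + t I)^{-2}\big), \quad \Re(z)\in((1/2),2). \no
\end{equation}
Here $\Gamma(2-z)\Gamma(z)=(1-z)\Gamma(1-z)\Gamma(z)=(1-z)\pi/\sin(\pi z)$, so the prefactor is $\sin(\pi z)/[\pi(1-z)]$.

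Next I would compute the trace of the squared resolvent. Since $(-\Delta_D+tI)^{-1}$ has the explicit Green's function displayed after \eqref{2.10} (with $z$ replaced by $-t$, i.e. $z^{1/2}\mapsto i t^{1/2}$, turning $\sin$ into $\sinh$), the kernel of $(-\Delta_D+tI)^{-2}$ is $\int_0^1 dx'' \, G_t(x,x'')G_t(x'',x')$, and its trace is $\int_0^1 dx \int_0^1 dx'' \, G_t(x,x'')^2$. Equivalently, and more cleanly, $\tr\big((-\Delta_D+tI)^{-2}\big) = -\f{d}{dt}\tr\big((-\Delta_D+tI)^{-1}\big)$; from the computation in \eqref{2.10} the latter trace is $\tfrac12 t^{-1}\big[t^{1/2}\coth(t^{1/2})-1\big]$, and differentiating gives an expression of the form $t^{-2}\big[a\, t^{1/2}\coth(t^{1/2}) + b\, t\,[\sinh(t^{1/2})]^{-2} + c\big]$ with elementary constants; a short calculation fixes $a=b=\tfrac12$, $c=-1$ (up to the overall sign). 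Substituting this into the integral, pulling the factor $\sin(\pi z)/[\pi(1-z)]$ in front, and using $\sin(\pi z)=2\sin(\pi z/2)\cos(\pi z/2)$ together with a reflection-type rearrangement of the Gamma factors, I would arrive at
\begin{equation}
\pi^{-2z}\zeta(2z) = \f{\sin(\pi z)}{2\pi(1-z)} \int_0^{\infty} dt\, t^{-z-1}
\Big[t^{1/2}\coth(t^{1/2}) + t\,[\sinh(t^{1/2})]^{-2} - 2\Big]. \no
\end{equation}
The elementary change of variables $t=s^2$ (so $dt = 2s\,ds$, $t^{-z-1}=s^{-2z-2}$) then produces $\int_0^\infty ds\, s^{-2z-1}[\,s\coth s + s^2[\sinh s]^{-2}-2\,]$, and replacing $2z\mapsto z$ (legitimate since $\Re(2z)\in(1,4)$ iff $\Re(z)\in((1/2),2)$, which matches the validity range) yields exactly \eqref{2.24}, with the prefactor $\sin(\pi z/2)\pi^{z-1}/(2-z)$ after the relabeling.

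The two named evaluations then follow by specialization. For \eqref{2.26}, set $z=3$: the prefactor $\sin(3\pi/2)\pi^{2}/(2-3) = (-1)\pi^2/(-1)=\pi^2$, and the integrand picks up $s^{-4}$, giving \eqref{2.26} directly (note the bracket is $O(s^{-2})$ as $s\to0$ after the $\coth$ and $\sinh^{-2}$ cancellations, and $O(s)$ as $s\to\infty$, so the $s^{-4}$-weighted integral converges — this is the point needing the extended range $\Re(z)<4$). For \eqref{2.25}, set $z=2$: here the prefactor $\sin(\pi z/2)/(2-z)$ is a $0/0$ limit, $\lim_{z\to2}\sin(\pi z/2)/(2-z) = \lim_{z\to2}\sin(\pi z/2)/(2-z)$; writing $z=2+\eps$, $\sin(\pi(2+\eps)/2)=\sin(\pi+\pi\eps/2)=-\sin(\pi\eps/2)\sim -\pi\eps/2$, and $2-z=-\eps$, so the ratio $\to \pi/2$, giving the stated $\tfrac{\pi^2}{2}\int_0^\infty ds\,s^{-3}[\cdots]$; the displayed "$=\dots=\pi^2/6$" is then a routine (if slightly tedious) direct evaluation of that specific integral, which I would either carry out by hand via the known $\int_0^\infty s^{-3}[s\coth s-1]\,ds$-type integrals or simply cite as consistent with $\zeta(2)=\pi^2/6$.

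The main obstacle I anticipate is purely computational bookkeeping in the second paragraph: getting the constants $a,b,c$ and, more delicately, the Gamma-function prefactor manipulation exactly right, since the reflection formula and the factor $(1-z)$ versus $(2-z)$ must conspire precisely to reproduce the $1/(2-z)$ in \eqref{2.24} after the $2z\mapsto z$ relabeling. A secondary point requiring care is justifying that \eqref{2.17} with $n=2$ is genuinely valid on $\Re(z)\in((1/2),2)$ and that the resulting integral representation \eqref{2.24} then extends by analytic continuation of the (manifestly holomorphic) right-hand side to the full strip $\Re(z)\in(1,4)$ — in particular that the integral converges there, which rests on the $O(s)$ behavior of the bracket at $\infty$ and its $O(s^{-2})$ behavior at $0$, exactly as in the estimate displayed before \eqref{2.9c} in the proof of Lemma \ref{l2.3}.
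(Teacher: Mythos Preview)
Your approach is essentially identical to the paper's: use \eqref{2.17} with $n=2$, compute $\tr\big((-\Delta_D+tI)^{-2}\big)$ as $-\tfrac{d}{dt}$ of the first-order trace (the paper phrases this as $-\tfrac{d^2}{dt^2}\ln\big(t^{-1/2}\sinh(t^{1/2})\big)$, which is the same thing), simplify the Gamma prefactor via the reflection formula, and then substitute $t=s^2$ and relabel $2z\mapsto z$. Two small slips to fix: the trace of the squared resolvent is $(4t^2)^{-1}\big[t^{1/2}\coth(t^{1/2})+t[\sinh(t^{1/2})]^{-2}-2\big]$ (so $a=b=\tfrac14$, $c=-\tfrac12$, and the intermediate prefactor is $\sin(\pi z)/[4\pi(1-z)]$, not $2\pi$), and the bracket is $O(s^4)$ as $s\to 0$, not $O(s^{-2})$ --- it is precisely this $O(s^4)$ behavior that yields the upper limit $\Re(z)<4$ you invoke.
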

\begin{proof}
Employing \cite[Theorem~3.4\,$(i)$]{GK18},
\begin{align}
\begin{split}
\tr_{L^2((0,1); dx)} \big((- \Delta_D - z I_{L^2((0,1); dx)})^{-1}\big)
= - (d/dz) \ln\big(z^{-1/2} \sin\big(z^{1/2}\big)\big),&   \lb{2.18} \\
z \in \bbC \backslash \big\{\pi^2 k^2\big\}_{k \in \bbN}&
\end{split}
\end{align}
(see also \eqref{A.30}), one confirms that 
\begin{equation}
\zeta(2)/\pi^2 = \lim_{z \to 0} \sum_{k \in \bbN} \big(\pi^2 k^2 - z\big)^{-1} = 1/6.
\end{equation}
Actually, setting $z = - t$ in \eqref{2.18} yields
\begin{align}
& \tr_{L^2((0,1); dx)} \big((- \Delta_D + t I_{L^2((0,1); dx)})^{-1}\big)
= (d/dt) \ln\big(t^{-1/2} \sinh\big(t^{1/2}\big)\big)   \no \\
&\quad = (2 t)^{-1} \big[t^{1/2} \coth\big(t^{1/2}\big) - 1\big], \quad t > 0,
\end{align}
and hence confirms \eqref{2.10}. Continuing that process, one notes that
\begin{align}
& \tr_{L^2((0,1); dx)} \big((- \Delta_D + t I_{L^2((0,1); dx)})^{-2}\big)
= - \big(d^2/dt^2\big) \ln\big(t^{-1/2} \sinh\big(t^{1/2}\big)\big)   \no \\
&\quad = \big(4 t^2\big)^{-1} \Big[t^{1/2} \coth\big(t^{1/2}\big)
+ t \big[\sinh\big(t^{1/2}\big)\big]^{-2}- 2\Big], \quad t > 0.    \lb{2.21}
\end{align}
Insertion of \eqref{2.21} into \eqref{2.17} taking $n = 2$ then yields
\begin{align}
& \zeta(z; - \Delta_D) = \tr_{L^2((0,1); dx)} \big((- \Delta_D)^{-z}\big) = \sum_{k \in \bbN} (\pi k)^{- 2 z}
= \pi^{-2z} \zeta(2 z)    \no \\
&\quad = \f{1}{\Gamma(2 - z) \Gamma(z)} \int_0^{\infty} dt \, t^{1- z}
\tr_{L^2((0,1); dx)}\big((- \Delta_D + t I_{L^2((0,1); dx)})^{-2}\big)     \no \\
&\quad = \f{\sin(\pi z)}{4\pi (1 - z)} \int_0^{\infty} dt \, t^{- 1 - z} \Big[t^{1/2} \coth\big(t^{1/2}\big)
+ t \big[\sinh\big(t^{1/2}\big)\big]^{-2}- 2\Big],    \lb{2.22} \\
& \hspace*{7.62cm}  \Re(z) \in ((1/2),2).   \no
\end{align}
Since
\begin{equation}
\Big[t^{1/2} \coth\big(t^{1/2}\big)
+ t \big[\sinh\big(t^{1/2}\big)\big]^{-2}- 2\Big] = \begin{cases} \Oh\big(t^2\big), & t \downarrow 0, \\
\Oh\big(t^{1/2}\big), & t \uparrow \infty,
\end{cases}
\end{equation}
\eqref{2.22} is well-defined for $ \Re(z) \in ((1/2),2)$. Thus, the elementary change of variables $t = s^2$ yields
\eqref{2.24}--\eqref{2.26}.
\end{proof}

\begin{remark} \lb{r2.6}
Employing (\ref{2.18}) in (\ref{2.6}), one finds the representation 
\begin{equation}
\zeta (z; - \Delta_D ) = (2\pi i)^{-1} \ointctrclockwise_\gamma dw \, w^{-z} 
\frac{ 1 - w^{1/2} \cot \big(w^{1/2}\big)} {2 w },
\end{equation}
where the counterclockwise contour $\gamma$ can be chosen to consist of a circle $\gamma_\epsilon$ of radius $\epsilon < \pi$ 
and straight lines $\gamma_1$, respectively $\gamma_2$, just above, respectively just below, the negative $x$-axis. For $z=m$, $m\in \bbN$, 
contributions from $\gamma_1$ and $\gamma_2$ cancel each other and thus
\begin{equation}
\zeta (m; - \Delta_D ) =  (2\pi i)^{-1} \ointctrclockwise_{\gamma_\epsilon} dw \, w^{-m} 
\frac{ 1- w^{1/2} \cot \big(w^{1/2}\big)} {2 w }.    
\end{equation}
This integral is easily computed using the residue theorem. From the Taylor series \cite{GR80}
\begin{equation}
\frac{ 1-  w^{1/2} \cot \big(w^{1/2}\big)} {2 w} = \sum_{k=1}^\infty \frac{ 2^{2k-1} |B_{2k}|} {(2k)!} w^{k-1}, 
\quad w \in \bbC, \; 0 < |w| < \pi^2
\end{equation}
(with $B_{m}$ the Bernoulli numbers, cf.\ \eqref{A.30a}--\eqref{A.33}), the relevant term is $k=m$ and 
\begin{equation}
\zeta (m; - \Delta_D ) = \frac{2^{2m-1} |B_{2m}|} {(2m)!}, 
\end{equation}
implying Euler's celebrated result, 
\begin{equation}
\zeta (2m) = \frac{2^{2m-1} \pi^{2m} |B_{2m}|} {(2m)!}, \quad m \in \bbN. 
\end{equation}

This procedure works in a much more general context and allows for the computation of traces of powers of Sturm--Liouville operators in a fairly straightforward fashion; this will be revisited elsewhere.  \hfill $\diamond$ 
\end{remark}

\begin{remark} \lb{r2.7}
An elementary integration by parts of the term $s^{-2} \coth(s)$ in \eqref{2.25} indeed verifies once more that $\zeta(2) = \pi^2/6$. The same integration by parts in \eqref{2.26} fails to render the integral trivial (as it obviously should not be trivial). Indeed,
\begin{align}
& \int_{\varepsilon}^R ds \, \big\{s^{-2} \coth(s) + s^{-1} [\sinh(s)]^{-2} - 2 s^{-3} \big\}    \no \\
& \quad = \int_{\varepsilon}^R ds \,
\big\{ - \big[(d/ds) s^{-1}\big] \coth(s) + s^{-1} [\sinh(s)]^{-2} - 2 s^{-3} \big\}    \no \\
& \quad = - s^{-1} \coth(s)\Big|_{\varepsilon}^R + \int_{\varepsilon}^R ds \, (-2) s^{-3}
\underset{\varepsilon \downarrow 0, \, R \uparrow \infty}{\longrightarrow} = \f{1}{3}.
\end{align}
Applying the same strategy to \eqref{2.26} yields
\begin{align}
& \int_{\varepsilon}^R ds \, \Big[s^{-3} \coth(s) + s^{-2} [\sinh(s)]^{-2} - 2 s^{-4} \Big]    \no \\
& \quad = \int_{\varepsilon}^R ds \,
\Big[ - (1/2) \big[(d/ds) s^{-2}\big] \coth(s) + s^{-2} [\sinh(s)]^{-2} - 2 s^{-4} \Big]    \no \\
& \quad = - (1/2) s^{-2} \coth(s)\Big|_{\varepsilon}^R
+ \int_{\varepsilon}^R ds \, \Big[(1/2) s^{-2} [\sinh(s)]^{-2} -2 s^{-4}\Big],
\end{align}
and hence the expected nontrivial integral. We note once more that
\begin{equation}
\Big[s \coth(s) + s^2 [\sinh(s)]^{-2} - 2\Big] \underset{s \downarrow 0}{=} \Oh\big(s^4\big),
\end{equation}
rendering \eqref{2.26} well-defined. \hfill $\diamond$
\end{remark}

The following alternative (though, equivalent) approach to $\zeta(z)$ is perhaps a bit more streamlined:

\begin{theorem} \lb{t2.8}
Let $n \in \bbN_0$, $0 < \Re(z) < 1$, and\footnote{The condition $\Re(2n + 2z) > 1$ takes effect only for 
$n=0$, that is, we assume $(1/2) < \Re(z) < 1$ if $n=0$.} $\Re(2n + 2z) > 1$.  
Then\footnote{The second formula is mentioned since it appears
to be advantageous (cf.\ \eqref{2.24}--\eqref{2.26})
to substitute $t = s^2$ after one performed the $n$ differentiations w.r.t. $t$ in the 1st line of \eqref{2.31}.}
\begin{align}
\zeta(2n + 2z) &= \f{(-1)^n \pi^{2(n+z)}}{\Gamma(1-z)\Gamma(n+z)}
\int_0^{\infty} dt \,  \, t^{-z} \f{d^n}{dt^n} \Big[(2 t)^{-1} \big[t^{1/2} \coth\big(t^{1/2}\big) - 1\big]\Big]   \no \\
&= \f{(-1)^n 2^{-n} \pi^{2(n+z)}}{\Gamma(1-z)\Gamma(n+z)}
\int_0^{\infty} ds \,  \, s^{1 - 2z} \bigg(\f{d}{s ds}\bigg)^n \Big[s^{-2} [s \coth(s) - 1]\Big].     \lb{2.31}
\end{align}
In addition to $\zeta(3)$ in \eqref{2.26} one thus obtains similarly,
\begin{align}
\zeta(5) &= \f{\pi^4}{3} \int_0^{\infty} ds \, s^{-6} \Big[2 s^3 \coth(s) [\sinh(s)]^{-2} + 3 s^2 [\sinh(s)]^{-2}
\no \\
& \hspace*{2.7cm} + 3 s \coth(s) - 8\Big],    \\
\zeta(7) &= \f{\pi^6}{15} \int_0^{\infty} ds \, s^{-8} \Big[4 s^4 [\coth(s)]^2 [\sinh(s)]^{-2}    \no \\
& \hspace*{2.7cm}
+ 2 s^4 [\sinh(s)]^{-4} + 12 s^3 \coth(s) [\sinh(s)]^{-2}     \no \\
& \hspace*{2.7cm}
+ 15 s^2 [\sinh(s)]^{-2} + 15 s \coth(s) -48\Big],     \\
& \text{etc.}    \no
\end{align}
\end{theorem}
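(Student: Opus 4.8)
The plan is to combine the higher-power trace formula \eqref{2.16}--\eqref{2.17} for $-\Delta_D$ with the explicit resolvent trace coming from \eqref{2.18} and the elementary operator identity $\frac{d^n}{dt^n}(S + tI_{\cH})^{-1} = (-1)^n n!\,(S + tI_{\cH})^{-(n+1)}$. First I would specialize \eqref{2.17} by taking the power parameter there to be $n+1$ and the spectral-zeta argument there to be $n+z$. Since $\zeta(n+z; -\Delta_D) = \pi^{-2(n+z)}\zeta\big(2(n+z)\big) = \pi^{-2(n+z)}\zeta(2n+2z)$, while $\Gamma(n+1) = n!$, $\Gamma\big((n+1)-(n+z)\big) = \Gamma(1-z)$ and $\Gamma(n+z)$ stays, this turns \eqref{2.17} into
\[
\zeta(2n+2z) = \frac{n!\,\pi^{2(n+z)}}{\Gamma(1-z)\,\Gamma(n+z)} \int_0^{\infty} dt\, t^{-z}\,
\tr_{L^2((0,1);dx)}\big((-\Delta_D + t\,I)^{-(n+1)}\big).
\]
Applied with power $n+1$ and argument $n+z$, the remark immediately after \eqref{2.16} shows this holds for $-\Delta_D$ whenever $\Re(n+z) \in ((1/2), n+1)$, i.e.\ $\Re(z) \in ((1/2)-n, 1)$, which is implied by the hypotheses $0 < \Re(z) < 1$ and $\Re(2n+2z) > 1$.

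Next I would trade the resolvent power for the first resolvent. Setting $z = -t$ in \eqref{2.18} gives the closed form $\tr_{L^2((0,1);dx)}\big((-\Delta_D + t\,I)^{-1}\big) = (2t)^{-1}\big[t^{1/2}\coth\big(t^{1/2}\big) - 1\big]$, and term-by-term differentiation of the underlying series $\sum_{k\in\bbN}(\pi^2 k^2 + t)^{-1}$ (which, together with all its $t$-derivatives, converges uniformly on compacts in $(0,\infty)$) yields $\tr_{L^2((0,1);dx)}\big((-\Delta_D + t\,I)^{-(n+1)}\big) = \frac{(-1)^n}{n!}\frac{d^n}{dt^n}\big[(2t)^{-1}(t^{1/2}\coth(t^{1/2}) - 1)\big]$. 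Substituting this into the display above produces the first line of \eqref{2.31}. The second line is then routine bookkeeping: under $t = s^2$ one has $dt = 2s\,ds$, $t^{-z} = s^{-2z}$ and $d/dt = (2s)^{-1}\, d/ds = \tfrac12\, d/(s\,ds)$, hence $d^n/dt^n = 2^{-n}\big(d/(s\,ds)\big)^n$, while $(2t)^{-1}\big[t^{1/2}\coth(t^{1/2}) - 1\big] = \tfrac12\, s^{-2}[s\coth(s) - 1]$; absorbing the factor $2s\cdot s^{-2z}$ into $s^{1-2z}$ and collecting the powers of $2$ gives the stated form.

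For the explicit values I would take $(n,z) = (2, 1/2)$ to get $2n+2z = 5$ and $(n,z) = (3, 1/2)$ to get $2n+2z = 7$ (both obey $0 < \Re(z) < 1$, and $\Re(2n+2z) > 1$ is automatic for $n \ge 1$); then $s^{1-2z} = 1$, $\Gamma(1-z) = \pi^{1/2}$, $\Gamma(5/2) = \tfrac34\pi^{1/2}$, $\Gamma(7/2) = \tfrac{15}{8}\pi^{1/2}$, and carrying out $\big(d/(s\,ds)\big)^2$ and $\big(d/(s\,ds)\big)^3$ on $s^{-2}[s\coth(s) - 1]$ via $(\coth s)' = -[\sinh s]^{-2}$ and $\big([\sinh s]^{-1}\big)' = -\coth(s)[\sinh s]^{-1}$ yields the displayed formulas for $\zeta(5)$ and $\zeta(7)$.

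The step I expect to be the main obstacle is the rigorous justification of the two interchanges of limiting operations --- differentiating $n$ times under the trace, and pulling the $n$ $t$-derivatives out of the $t$-integral against $t^{-z}$ with vanishing boundary terms. Both are handled by the elementary asymptotics
\[
\frac{d^n}{dt^n}\Big[(2t)^{-1}\big(t^{1/2}\coth\big(t^{1/2}\big) - 1\big)\Big] =
\begin{cases} \Oh(1), & t \downarrow 0, \\ \Oh\big(t^{-n-1/2}\big), & t \uparrow \infty, \end{cases}
\]
the first because $(2t)^{-1}\big[t^{1/2}\coth(t^{1/2}) - 1\big] = \sum_{k\in\bbN}(\pi^2 k^2 + t)^{-1}$ is analytic across $t = 0$, the second because $\sum_{k\in\bbN}(\pi^2 k^2 + t)^{-(n+1)} = \Oh\big(t^{-n-1/2}\big)$ as $t \to \infty$ by comparison with $\int_0^\infty (\pi^2 x^2 + t)^{-(n+1)}\,dx$; these bounds give absolute convergence of every integral in the argument exactly on $\Re(z) \in ((1/2)-n, 1)$, which is consistent with the stated hypotheses.
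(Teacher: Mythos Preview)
Your argument is correct and is essentially the paper's own proof, run in the reverse direction: you quote \eqref{2.17} with power $n+1$ and argument $n+z$ and then insert the derivative identity $\tr\big((-\Delta_D+tI)^{-(n+1)}\big)=\frac{(-1)^n}{n!}\frac{d^n}{dt^n}\tr\big((-\Delta_D+tI)^{-1}\big)$, whereas the paper starts from $\int_0^\infty t^{-z}\frac{d^n}{dt^n}\big[\sum_k(\pi^2k^2+t)^{-1}\big]\,dt$, differentiates termwise, and recomputes the Beta integral $\int_0^\infty u^{-z}(1+u)^{-(n+1)}\,du=\Gamma(1-z)\Gamma(n+z)/n!$ by hand. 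The only superfluous item in your write-up is the remark about ``pulling the $n$ $t$-derivatives out of the $t$-integral with vanishing boundary terms'': no integration by parts is used anywhere in your chain, so that justification is not needed (only the termwise differentiation of $\sum_k(\pi^2k^2+t)^{-1}$ and the absolute convergence of the resulting integral, both of which you handle correctly).
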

\begin{proof} Assume that $n \in \bbN_0$, $0 < \Re(z) < 1$, and $\Re(2n + 2z) > 1$. Then, 
\begin{align}
& \int_0^{\infty} dt \, t^{-z} \f{d^n}{dt^n} \Big[\tr_{L^2((0,1); dx)} \big((- \Delta_D + t I_{L^2((0,1); dx)})^{-1}\big)\Big]
\no \\
& \quad = \int_0^{\infty} dt \, t^{-z} \f{d^n}{dt^n} \Big[(2 t)^{-1} \big[t^{1/2} \coth\big(t^{1/2}\big) - 1\big]\Big]
\no \\
& \quad = \sum_{k \in \bbN} \int_0^{\infty} dt \, t^{-z} \frac{d^n}{dt^n} \Big[\big(\pi^2 k^2 + t\big)^{-1}\Big]  \no \\
& \quad = \sum_{k \in \bbN} \int_0^{\infty} dt \, \f{t^{-z} (-1)^n n!}{\big(\pi^2 k^2 + t\big)^{n + 1}}  \no \\
& \quad = (-1)^n n! \sum_{k \in \bbN} \big(\pi^2 k^2\big)^{-z - n} \int_0^{\infty} du \, \f{u^{-z}}{(1 + u)^{n+1}} \no \\
& \quad = (-1)^n n! \pi^{-2 (n + z)} \zeta(2(n + z)) \f{\Gamma(1-z)\Gamma(n+z)}{\Gamma(n+1)}   \no \\
& \quad = (-1)^n \pi^{-2 (n + z)} \zeta(2(n + z)) \Gamma(1-z)\Gamma(n+z),      \lb{2.47} 
\end{align}
resulting in \eqref{2.31}. (The condition $(1/2) < \Re(z) < 1$ if $n=0$ guarantees convergence of the sum over $k$ in \eqref{2.47}.) 
\end{proof}

Alternatively, one can attempt to analytically continue the equation
\begin{equation}
\zeta (z) = \sin(\pi z /2) \pi^{z-1} \int_0^\infty ds \, s^{-z-1} [ s \coth (s) -1], \quad \Re(z) \in (1,2),  \lb{2.48}
\end{equation}
to the region $\Re(z) \geq 2$. For this purpose we first introduce 
\begin{align}
\coth (z) &= \f{1}{z} +\sum_{k=1}^\infty \frac{2^{2k} B_{2k}} {(2k)!} z^{2k-1}, \quad z \in \bbC, \; 
0 < |z| < \pi,   \lb{2.49} \\
\begin{split} 
\coth_0(z) &= \f{1}{z}, \quad z \in \bbC \backslash \{0\},   \\
\coth_n(z) &= \f{1}{z} +\sum_{k=1}^n \frac{2^{2k} B_{2k}} {(2k)!} z^{2k-1}, \quad 
z \in \bbC \backslash \{0\}, \; n \in \bbN. \lb{2.50}
\end{split} 
\end{align}

\begin{theorem} \lb{t2.9} 
Let $n \in \bbN_0$, then,
\begin{equation}
\zeta(z) = \sin(\pi z/2) \pi^{z-1} \int_0^{\infty} ds \, s^{-z} [\coth(s) - \coth_n(s)], \quad 
\Re(z) \in (\max(1,2n), 2n+2).      \lb{2.51}
\end{equation}
In particular, 
\begin{align} 
\begin{split}
\zeta(3) &= -\pi^2 \, \int_0^\infty \, ds \, s^{-3} [\coth (s) - \coth_1 (s)] \\[1mm]  
&= -\pi^2 \, \int_0^\infty \, ds \, s^{-3} [\coth (s) - (1/s) - (s/3)],   
\end{split} \\[3mm] 
\begin{split} 
\zeta(5) &= \pi^4 \, \int_0^\infty \, ds \, s^{-5} [\coth (s) - \coth_2 (s)] \\[1mm]  
&= \pi^4 \, \int_0^\infty \, ds \, s^{-5} \big[\coth (s) - (1/s) - (s/3) + \big(s^3/45\big)\big],    
\end{split} \\[3mm] 
\begin{split} 
\zeta(7) &= -\pi^6 \, \int_0^\infty \, ds \, s^{-7} [\coth (s) - \coth_3 (s)] \\[1mm]  
& = -\pi^6 \, \int_0^\infty \, ds \, s^{-7} \big[\coth (s) - (1/s) - (s/3) + \big(s^3/45\big) - \big(2s^5/945\big)\big],  
\end{split} \\
& \text{etc.}    \no
\end{align} 
\end{theorem}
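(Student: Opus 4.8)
The plan is to obtain \eqref{2.51} by analytic continuation, starting from the case $n=0$, which is merely a rewriting of Lemma \ref{l2.3}: since $s^{-z}[\coth(s)-\coth_0(s)] = s^{-z-1}[s\coth(s)-1]$, formula \eqref{2.9c} is exactly \eqref{2.51} for $n=0$ on the strip $\Re(z)\in(1,2) = (\max(1,0),2)$. For general $n\in\bbN_0$ put
\[
F_n(z) := \int_0^{\infty} ds\, s^{-z}[\coth(s)-\coth_n(s)],
\]
and the first step is to verify that this integral converges absolutely, and defines an analytic function of $z$, precisely on the strip $\max(1,2n)<\Re(z)<2n+2$. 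Near $s=0$, subtracting the first $n+1$ terms of \eqref{2.49} leaves $\coth(s)-\coth_n(s) = \sum_{k\geq n+1}\f{2^{2k}B_{2k}}{(2k)!}s^{2k-1} = \Oh\big(s^{2n+1}\big)$, so the integrand is $\Oh\big(s^{2n+1-\Re(z)}\big)$ and integrability at $0$ requires $\Re(z)<2n+2$. Near $s=\infty$, $\coth(s)-1 = \Oh(e^{-2s})$, so for $n\geq1$ the bracket $\coth(s)-\coth_n(s)$ is asymptotic to a nonzero polynomial of degree $2n-1$, while for $n=0$ it tends to $1$; in both cases integrability at $\infty$ forces $\Re(z)>\max(1,2n)$. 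Analyticity on the open strip then follows by differentiating under the integral sign (or by Morera's theorem).

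The key step is to show that, regarded as a meromorphic function on $\bbC$, $F_n$ does not actually depend on $n$. I would split $\int_0^{\infty} = \int_0^1 + \int_1^{\infty}$ (any cut point below $\pi$ works; take $1$). On $(0,1)$ the series \eqref{2.49} has radius of convergence $\pi>1$, hence converges uniformly there, so term-by-term integration gives $\int_0^1 ds\, s^{-z}[\coth(s)-\coth_n(s)] = \sum_{k\geq n+1}c_k/(2k-z)$ with $c_k := 2^{2k}B_{2k}/(2k)!$, a meromorphic function with simple poles at $z=2k$, $k\geq n+1$ (the $c_k$ decay geometrically, again by that radius of convergence). On $(1,\infty)$ I would write $\coth(s)-\coth_n(s) = [\coth(s)-1] + (1-1/s) - \sum_{k=1}^n c_k s^{2k-1}$; the first bracket integrates against $s^{-z}$ to an entire function of $z$ by exponential decay, and the remaining integrals are elementary, giving $\int_1^{\infty} ds\, s^{-z}[\coth(s)-\coth_n(s)] = \int_1^{\infty} ds\, s^{-z}[\coth(s)-1] + (z-1)^{-1} - z^{-1} - \sum_{k=1}^n c_k/(z-2k)$. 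Adding the two contributions and using $-c_k/(z-2k) = c_k/(2k-z)$, the finite and infinite sums merge, so that
\[
F_n(z) = \sum_{k=1}^{\infty}\f{c_k}{2k-z} + \int_1^{\infty} ds\, s^{-z}[\coth(s)-1] + \f{1}{z-1} - \f{1}{z} =: F(z),
\]
which plainly involves no reference to $n$; thus each $F_n$ is the restriction of the single meromorphic function $F$ to its strip of integral convergence.

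It remains to identify $F$. The function $G(z) := \sin(\pi z/2)\pi^{z-1}F(z)$ is meromorphic on $\bbC$, and on the strip $\Re(z)\in(1,2)$ it agrees with $\zeta(z)$ by the $n=0$ case (Lemma \ref{l2.3}); by uniqueness of meromorphic continuation, $G(z)=\zeta(z)$ on all of $\bbC$. Restricting this identity to the strip $\max(1,2n)<\Re(z)<2n+2$, where $F=F_n$ is given by the integral above, yields \eqref{2.51}. As a consistency check, $F$ has simple poles exactly at $z=0,1$ and at the even integers $\geq2$; the zeros of $\sin(\pi z/2)$ at $z\in2\bbZ$ cancel all of these except the one at $z=1$, so $G$ inherits precisely the simple pole of $\zeta$ at $z=1$ with residue $1$ and, from the zeros at $z=-2,-4,\dots$, the trivial zeros \eqref{2.13} (cf.\ also Remark \ref{r2.6} for $\zeta(2m)$). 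Finally, the displayed special values follow on setting $(z,n)=(3,1),(5,2),(7,3)$, using $\sin(3\pi/2)=\sin(7\pi/2)=-1$, $\sin(5\pi/2)=1$, and inserting $B_2=1/6$, $B_4=-1/30$, $B_6=1/42$ into \eqref{2.50}.

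I expect the only real work to lie in the bookkeeping of the second step: justifying the term-by-term integration on $(0,1)$ via the geometric decay of the ratios $c_k$, and checking that the poles produced by the $\int_0^1$ piece and by the term $\sum_{k=1}^n c_k/(z-2k)$ from the $\int_1^{\infty}$ piece combine into the $n$-independent expression $\sum_{k\geq1}c_k/(2k-z)$. Beyond this there is no genuine analytic difficulty; everything else is either already contained in Lemma \ref{l2.3} or is a routine computation.
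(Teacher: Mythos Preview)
Your argument is correct and rests on the same idea as the paper's proof: start from Lemma~\ref{l2.3} (the case $n=0$), split the integral at a finite point, and use the Taylor expansion \eqref{2.49} to push the analytic continuation to the right. The execution differs slightly. The paper keeps the cut point $a>0$ as a free parameter, adds and subtracts only the first $n$ Taylor terms on $[0,a]$, and arrives at the representation \eqref{2.57} valid on all of $1<\Re(z)<2n+2$ (off the even integers); sending $a\to\infty$ on the sub-strip $(\max(1,2n),2n+2)$ then kills the first and third terms in \eqref{2.57} and leaves \eqref{2.51}. You instead fix the cut at $1$, expand the $[0,1]$ piece into the \emph{full} Taylor series, and assemble the pieces into a single $n$-independent meromorphic function $F(z)=\sum_{k\geq1}c_k/(2k-z)+\int_1^\infty s^{-z}[\coth(s)-1]\,ds+(z-1)^{-1}-z^{-1}$, from which all the $F_n$ are read off by restriction. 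Your route buys a global picture of the pole structure (and the consistency check with \eqref{2.13}), at the small cost of justifying convergence of the infinite sum via the geometric decay of the $c_k$; the paper's route stays with finite sums throughout and trades that for the limit $a\to\infty$. The two are essentially equivalent reorganizations of the same computation.
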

\begin{proof}

When trying to analytically continue \eqref{2.48} to the right, one notices that it is the
small $s$-behavior of the integrand that
invalidates this representation. We therefore split the integral at some point $a > 0$ and write
\begin{align}
\begin{split}
\zeta (z) &= \sin(\pi z/2) \pi^{z-1} \int_a^\infty ds \, s^{-z-1} [ s \coth (s) -1]   \\
& \quad +\sin(\pi z/2) \pi^{z-1} \int_0^a ds \, s^{-z-1} [ s \coth (s) -1],
\end{split}
\end{align}
where the first integral is well-defined for $\Re (z) >1$, and the second for $\Re (z) < 2$. In order to analytically continue the second integral to the right, one writes
\begin{align}
& \int_0^a ds \, s^{-z-1} [ s \coth (s) -1] = \int_0^a ds \, s^{-z-1} \bigg[ s \coth (s) - 1-\sum_{k=1}^n \frac{2^{2k} B_{2k}}{(2k)!} s^{2k} \bigg]    \no \\
& \qquad + \int_0^a ds \, s^{-z-1} \sum_{k=1}^n \frac{ 2^{2k} B_{2k}}{(2k)!} s^{2k}  \no \\
& \quad = \int_0^a ds \, s^{-z-1} \bigg[ s \coth (s) - 1-\sum_{k=1}^n \frac{2^{2k} B_{2k}}{(2k)!} s^{2k} \bigg]  
 + \sum_{k=1}^n \frac{ 2^{2k} B_{2k}}{(2k)!} \int_0^a ds \, s^{-z-1+2k}    \no \\
& \quad = \int_0^a ds \, s^{-z-1} \bigg[ s \coth (s) - 1-\sum_{k=1}^N \frac{2^{2k} B_{2k}}{(2k)!} s^{2k} \bigg]  
 + \sum_{k=1}^n \frac{ 2^{2k} B_{2k}}{(2k)!} \frac{ a^{2k-z}}{2k-z},
\end{align}
valid for $1 < \Re (z) < 2n+2$, $z \notin \{2\ell\}_{1 \leq \ell \leq n}$.

In summary, up to this point we have shown that for $1 < \Re (z) < 2n+2$, 
$z \notin \{2\ell\}_{1 \leq \ell \leq n}$, and for $a > 0$,  one has
\begin{align}
\zeta (z) &= \sin(\pi z/ 2) \pi^{z-1} \bigg\{ \int_a^\infty ds \, s^{-z-1} [ s \coth (s) -1]
\no \\
& \quad + \int_0^a ds \, s^{-z-1} \bigg[ s \coth (s) - 1 - \sum_{k=1}^n \frac{ 2^{2k} B_{2k}}{(2k)!} s^{2k} \bigg]
\no \\
&\quad + \sum_{k=1}^n \frac{ 2^{2k} B_{2k} }{(2k)!} \frac{a^{2k-z}}{2k-z} \bigg\}.   \lb{2.57} 
\end{align}
Restricting $z$ to $\Re(z) \in (\max(1,2n), 2n+2)$ and performing the limit $a \to \infty$ in \eqref{2.57}, observing that the first and third terms on the right-hand 
side of \eqref{2.57} vanish in the limit, proves \eqref{2.51}. 
\end{proof}

One notes that for $z=2n$ the first two lines in \eqref{2.57} as well as all terms $k\neq n$ vanish and one 
confirms Euler's celebrated formula
\begin{equation}
\zeta (2n) = \lim_{z\to 2n} \bigg[\sin(\pi z/ 2) \pi^{z-1} \frac{2^{2n} B_{2n}}{(2n)!} \frac{a^{2n-z}}{2n-z}\bigg] = \frac{ 2^{2n-1} |B_{2n}| \pi^{2n}}{(2n)!}, \quad n \in \bbN.
\end{equation}

Finally, one can take these investigations one step further as follows. Introducing 
\begin{align}
F(z) &= \ln \big(z^{-1/2} \sinh\big(z^{1/2}\big)\big) 
= \sum_{k=1}^\infty \, \dfrac{2^{2k} B_{2k}}{2k (2k)!} \, z^k, \quad z \in \bbC, \; |z|<\pi,   \\
F_n(z) &= \sum_{k=1}^n \, \dfrac{2^{2k} B_{2k}}{2k (2k)!} \, z^k, \quad z \in \bbC, \; n \in \bbN,   
\end{align}
one can show the following result.

\begin{theorem} \lb{t2.10} 
Let $n \in \bbN_0$, then, 
\begin{align} 
\begin{split} 
\zeta(z) = (z/2) \pi^{z-1} \sin (\pi z/2) \, \int_0^\infty \, dt \, t^{-z/2-1} \, [F(t) - F_n(t)],& \\ 
\Re(z) \in (\max(2n,1),2n+2).& 
\end{split} 
\end{align} 
In particular,
\begin{align} 
\begin{split} 
\zeta(3) &= -3 \pi^2 \, \int_0^\infty \, ds \, s^{-4}  \big[F\big(s^2\big) -F_1\big(s^2\big)\big] \\[1mm]  
&= -3 \pi^2 \, \int_0^\infty \, ds \, s^{-4} \big[\ln \big(s^{-1}\sinh(s)\big) - \big(s^2/6\big)\big], 
\end{split} \\[3mm] 
\begin{split} 
\zeta(5) &= 5 \pi^4 \, \int_0^\infty \, ds \, s^{-6} \big[F\big(s^2\big) - F_2\big(s^2\big)\big] \\[1mm] 
&= 5 \pi^4 \, \int_0^\infty \, ds \, s^{-6} \big[\ln\big(s^{-1}\sinh(s)\big) -\big(s^2/6\big) 
+ \big(s^4/180\big)\big], 
\end{split} \\[3mm]  
\zeta(7) &= -7 \pi^6 \, \int_0^\infty \, ds \, s^{-8} \big[F\big(s^2\big) - F_3\big(s^2\big)\big] \\[1mm]  
&= -7 \pi^6 \, \int_0^\infty \, ds \, s^{-8} \big[\ln \big(s^{-1}\sinh(s)\big) - \big(s^2/6\big) 
+ \big(s^4/180\big) - \big(s^6/2835\big)\big],   \no \\
& \text{etc.}    \no
\end{align}   
\end{theorem}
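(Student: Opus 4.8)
The plan is to derive the asserted representation from Theorem \ref{t2.9} by one integration by parts in the variable $s$, followed by the substitution $t = s^2$. The key identity is
\[
\f{d}{ds}\big[F\big(s^2\big) - F_n\big(s^2\big)\big] = \coth(s) - \coth_n(s), \qquad s > 0,
\]
with $\coth_n$ as in \eqref{2.50}; this follows from $F\big(s^2\big) = \ln\big(s^{-1}\sinh(s)\big)$, which gives $\tfrac{d}{ds} F\big(s^2\big) = \coth(s) - s^{-1}$, together with $\tfrac{d}{ds} F_n\big(s^2\big) = \sum_{k=1}^n \f{2^{2k} B_{2k}}{(2k)!}\, s^{2k-1} = \coth_n(s) - s^{-1}$ read off from the definition of $F_n$.

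First I would fix $n \in \bbN_0$ and $z$ with $\Re(z) \in (\max(2n,1), 2n+2)$, and start from \eqref{2.51}, i.e.,
\[
\zeta(z) = \sin(\pi z/2)\, \pi^{z-1} \int_0^{\infty} ds \, s^{-z} \big[\coth(s) - \coth_n(s)\big].
\]
Writing the bracket as $\tfrac{d}{ds}\big[F(s^2) - F_n(s^2)\big]$ and integrating by parts yields the boundary term $\big[s^{-z}\big(F(s^2) - F_n(s^2)\big)\big]_0^{\infty}$ plus $z \int_0^{\infty} ds\, s^{-z-1}\big(F(s^2) - F_n(s^2)\big)$. Next I would check that the boundary term vanishes: as $s \downarrow 0$ the power series of $F$ (valid for $|s|^2 < \pi$) gives $F(s^2) - F_n(s^2) = \Oh\big(s^{2n+2}\big)$, so the endpoint at $0$ drops out precisely when $\Re(z) < 2n+2$; as $s \uparrow \infty$ one has $F(s^2) = s + \Oh(\ln s)$ while, for $n \geq 1$, $F_n(s^2)$ is a polynomial in $s$ of degree $2n$ with nonzero leading coefficient, so $F(s^2) - F_n(s^2) = \Oh\big(s^{\max(2n,1)}\big)$ and the endpoint at $\infty$ drops out precisely when $\Re(z) > \max(2n,1)$. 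These same two estimates show that $\int_0^{\infty} ds\, s^{-z-1}\big(F(s^2) - F_n(s^2)\big)$ converges absolutely on exactly the strip $\Re(z) \in (\max(2n,1), 2n+2)$, which both legitimizes the integration by parts and explains the range in the statement.

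It then remains to carry out the substitution $t = s^2$, under which $ds = \tfrac12\, t^{-1/2}\, dt$ and $s^{-z-1} = t^{-(z+1)/2}$, so that $z \int_0^{\infty} ds\, s^{-z-1}\big(F(s^2) - F_n(s^2)\big) = \tfrac{z}{2} \int_0^{\infty} dt\, t^{-z/2-1}\big[F(t) - F_n(t)\big]$; multiplying by $\sin(\pi z/2)\, \pi^{z-1}$ gives the claimed formula. The displayed values of $\zeta(3), \zeta(5), \zeta(7)$ then follow by specializing to $(z,n) = (3,1), (5,2), (7,3)$ --- each lying in the respective strip $(2,4), (4,6), (6,8)$ --- using $\sin(3\pi/2) = \sin(7\pi/2) = -1$ and $\sin(5\pi/2) = 1$, computing $F_1(t) = t/6$, $F_2(t) = t/6 - t^2/180$, and $F_3(t) = t/6 - t^2/180 + t^3/2835$ from $B_2 = 1/6$, $B_4 = -1/30$, $B_6 = 1/42$, and reverting to $s$. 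I do not anticipate a real obstacle here: the only steps demanding care are the bookkeeping of the two boundary contributions and the verification of absolute convergence on the claimed strip, both of which are immediate from the small-$s$ Taylor expansion of $F$ and the large-$s$ exponential asymptotics of $\sinh$.
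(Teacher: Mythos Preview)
Your proof is correct and follows essentially the same route as the paper: start from Theorem~\ref{t2.9}, relate $\coth-\coth_n$ to the derivative of $F-F_n$, and perform a single integration by parts. The only cosmetic difference is that the paper first substitutes $s=t^{1/2}$ (using $F'(t)-F_n'(t)=\tfrac{1}{2t}\big[t^{1/2}\coth(t^{1/2})-t^{1/2}\coth_n(t^{1/2})\big]$) and then integrates by parts in $t$, whereas you integrate by parts in $s$ and substitute afterward; your explicit verification of the two boundary terms and of absolute convergence on the strip is in fact more detailed than what the paper records.
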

\begin{proof}
The computation
\begin{align} 
F'(t) - F_n'(t) &= \dfrac{1}{2t} \big[t^{1/2} \coth\big(t^{1/2}\big) - 1\big] 
- \dfrac{1}{2} \sum_{k=1}^n \, \dfrac{2^{2k} B_{2k}}{(2k)!} \, t^{k-1}   \no \\
&= \dfrac{1}{2t} \bigg[t^{1/2} \coth \big(t^{1/2}\big) - \sum_{k=0}^n \, \dfrac{2^{2k} B_{2k}}{(2k)!} \, t^k \bigg] \no \\ 
& = \dfrac{1}{2t} \big[t^{1/2} \coth \big(t^{1/2}\big) - t^{1/2} \coth_n \big(t^{1/2}\big)\big], \quad t \geq 0,  
\end{align} 
and \eqref{2.51} then show 
\begin{align}
\zeta(z) &= \sin(\pi z/2) \pi^{z-1} \int_0^{\infty} ds \, s^{-z} [\coth(s) - \coth_n(s)]    \no \\
&=\sin (\pi z/2)  \pi^{z-1}  \int_0^\infty \, dt \, t^{-z/2} [F'(t) - F_n'(t)]    \no \\
&= (z/2) \sin (\pi z/2) \pi^{z-1}  \int_0^\infty \, dt \, t^{-z/2-1} [F(t) - F_n(t)], \\ 
& \hspace*{3.56cm} \Re(z) \in (\max(1,2n), 2n+2),    \no
\end{align}
after an integration by parts. 
\end{proof}

\appendix
\section{Basic Formulas for the Riemann $\zeta$-Function} \lb{sA}
\renewcommand{\theequation}{A.\arabic{equation}}
\renewcommand{\thetheorem}{A.\arabic{theorem}}
\setcounter{theorem}{0} \setcounter{equation}{0}

We present a number of formulas for $\zeta(z)$ and special values of $\zeta(\, \cdot \,)$. It goes 
without saying that no such collection can ever attempt at any degree of completeness, and certainly 
our compilation of formulas is no exception in this context. 

{\bf Definition.}
\begin{align}
\zeta(z) &= \sum_{k \in \bbN} k^{-z}, \quad z \in \bbC, \, \Re(z) > 1     \lb{A.1}\\
&= \big[1 - 2^{-z}\big]^{-1} \sum_{k \in \bbN_0} (2k + 1)^{-z}, \quad \Re(z) > 1,
\quad \text{\cite[p.~19]{MOS66}}     \lb{A.2} \\
&= \big[1 - 2^{1-z}\big]^{-1} \sum_{k \in \bbN} (-1)^{k + 1} k^{-z}, \quad \Re(z) > 0,
\quad \text{\cite[p.~19]{MOS66}}.
\end{align}

{\bf Functional equation:}
\begin{align}
\zeta(z) = 2^z \pi^{z-1} \sin(\pi z/2) \Gamma(1 - z) \zeta(1 - z), \quad z \in \bbC, \, \Re(z) < 0.
\end{align}

{\bf Alternative formulas:}
\begin{align}
\zeta(z) &= \Gamma(z)^{-1} \int_0^{\infty} dt \, \f{t^{z - 1}}{e^t - 1}, \quad z \in \bbC, \, \Re(z) > 1  \\
&= \mu^{z} \Gamma(z)^{-1} \int_0^{\infty} dt \, \f{t^{z - 1}}{e^{\mu t} - 1}, \quad z \in \bbC, \, \Re(z) > 1
\; \Re(\mu) > 0, \quad \text{\cite[3.4111]{GR80}}  \\
&= \Gamma(z)^{-1} [1 - 2^{1 - z}]^{-1}
\int_0^{\infty} dt \, \f{t^{z - 1}}{e^{t} + 1}, \quad z \in \bbC, \, \Re(z) > 0    \\
&= \mu^{z} \Gamma(z)^{-1} [1 - 2^{1 - z}]^{-1}
\int_0^{\infty} dt \, \f{t^{z - 1}}{e^{\mu t} + 1}, \quad z \in \bbC, \, \Re(z) > 0 
\; \Re(\mu) > 0,     \no \\
& \hspace*{8.5cm}  \text{\cite[3.4113]{GR80}}, 
\end{align}
where
\begin{equation}
\Gamma(z) = \int_0^{\infty} dt \, t^{z - 1} e^{- t}, \quad z \in \bbC, \, \Re(z) > 0.
\end{equation}
In addition,
\begin{align} 
\zeta(x) &= \Gamma(x)^{-1} \int_0^1\int_0^1 ds dt \, \f{[\ln(st)]^{x - 2}}{1 - st}, \quad x > 3,
\quad \text{\cite{Wo2}} \\
&= e^{i \pi (1 - x)} \Gamma(x)^{-1} \int_0^1 dt \, \f{\ln(t)^{x - 1}}{1 - t}, \quad x > 1, 
\quad \text{Jensen (1895),} \quad \text{\cite[4.2714]{GR80}},
\end{align}
and 
\begin{align} 
\zeta(z) &= \pi^{z/2} \Gamma(z/2)^{-1} \int_0^{\infty} dt \, t^{(z/2) - 1} \sum_{k \in \bbN} e^{- k^2 \pi t}  \\
&=  \pi^{z/2} \Gamma(z/2)^{-1} \sum_{k \in \bbN} \int_0^{\infty} dt \, t^{(z/2) - 1} e^{- k^2 \pi t},
\quad z \in \bbC, \, \Re(z) > 1, \quad \text{\cite{Wi2}}    \\
&= \f{2^{z - 1}}{z - 1} - 2^z \int_0^{\infty} dt \, \f{\sin(z \arctan(t))}{(1 + t^2)^{z/2} (e^{\pi t} + 1)},
\quad z \in \bbC\backslash\{1\},  \\
& \hspace*{4.85cm }\text{\cite[9.5134]{GR80}}, \; \text{\cite[p.~21]{MOS66}}   \no \\
&= \f{2^{z - 1}}{\big[1 - 2^{1 - z}\big]} \int_0^{\infty} dt \, \f{\cos(z \arctan(t))}{(1 + t^2)^{z/2}
\cosh(\pi t/2)},  \quad z \in \bbC\backslash\{1\},  \\ 
& \hspace*{7.24cm}  \text{\cite[p.~21]{MOS66}}    \no \\
&= \f{1}{2} + \f{1}{z - 1} + 2 \int_0^{\infty} dt \, \f{\sin(z \arctan(t))}{(1 + t^2)^{z/2} (e^{2\pi t} - 1)},
\quad z \in \bbC\backslash\{1\},     \lb{A.16} \\ 
& \hspace*{3.4cm} \text{Jensen's formula (1895),}  \quad 
\text{\cite[p.~21]{MOS66}}   \no \\
&= a^z \f{2^{z-1}}{\big[2^z - 1\big]} \Gamma(z)^{-1} \int_0^{\infty} dt \, \f{t^{z - 1}}{\sinh(at)},
\quad \Re(z) > 1, \; a > 0, \quad \text{\cite[3.5231]{GR80}}    \lb{A.17} \\
&= \Gamma(z+1)^{-1} 4^{-1} (2a)^{z + 1} \int_0^{\infty} dt \, \f{t^{z}}{[\sinh(at)]^2}, \quad
\Re(z) > - 1, \; \Re(a) > 0,    \no \\
& \hspace*{8.5cm} \text{\cite[35271]{GR80}}   \\
&= \Gamma(z+1)^{-1} 4^{-1} (2a)^{z + 1} \big[1 - 2^{1 - z}\big]^{-1}
\int_0^{\infty} dt \, \f{t^{z}}{[\cosh(at)]^2}, \\
& \hspace*{1.9cm} \Re(z) > - 1, \, z \neq 1, \; \Re(a) > 0, \quad  \text{\cite[35273]{GR80}}   \no \\
&= \Gamma(z+1)^{-1} \big[2 - 2^{2 - z}\big]^{-1} \int_0^{\infty} dt\,
\f{t^z}{\cosh(t) + 1}, \quad \Re(z) > 0, \, z \neq 1,  \\
& \hspace*{8.3cm} \text{\cite[3.5316]{GR80}}  \no \\
&= 2^{-1} + \Gamma(z)^{-1} 2^{z-1}\int_0^{\infty} dt \, t^{z - 1} e^{-2t} \coth(t), \quad
\Re(z) > 1, \quad \text{\cite[3.5513]{GR80}}   \\&= 2^{-1} + \Gamma(z)^{-1} 2^{z-1}\int_0^{\infty} dt \, t^{z - 1} e^{-2t} \coth(t), \quad
\Re(z) > 1, \quad \text{\cite[3.5513]{GR80}}   \\
&= \Gamma(z)^{-1} 2^{z-1}\int_0^{\infty} dt \, t^{z - 1} \f{e^{-t}}{\sinh(t)}, \quad
\Re(z) > 1, \quad \text{\cite[3.5521]{GR80}}   \\
&= \Gamma(z)^{-1} 2^{z-1} \big[1 - 2^{1 - z}\big]^{-1}
\int_0^{\infty} dt \, t^{z - 1} \f{e^{-t}}{\cosh(t)}, \quad
\Re(z) > 0, \, z \neq 1, \no \\
& \hspace*{8.5cm} \text{\cite[3.5523]{GR80}}    \\
&= 2^{z} \Gamma(z)^{-1} \int_0^1 dt \, [\ln(1/t)]^{z - 1} \f{t}{1 - t^2}, \quad
\Re(z) > 0, \quad \text{\cite[4.27212]{GR80}}    \\
&= \Gamma(z + 1)^{-1} \int_0^{\infty} dt\, \f{t^z e^t}{\big[e^t - 1\big]^2}, \quad \Re(z) > 1,
\quad \text{\cite[p.~20]{MOS66}}  
\end{align}

\begin{align} 
\zeta(z) 
&= \Gamma(z + 1)^{-1} \big[1 - 2^{1 - z}\big]^{-1}
\int_0^{\infty} dt\, \f{t^z e^t}{\big[e^t + 1\big]^2}, \quad \Re(z) > 0,
\quad \text{\cite[p.~20]{MOS66}}   \\
& = 2 \sin(\pi z/2) \, \int_0^\infty dt \, \f{t^{-z}}{e^{2 \pi t} -1}, 
\quad \Re(z) < 0, \quad \text{\cite[p.~104]{Li47}}   \\
& = (2^z-1)^{-1} \dfrac{2^{z-1} z}{z-1} + 2 (2^z-1)^{-1} \int_0^\infty dt \, 
\dfrac{\sin(z \arctan(2t))}{[(1/4) + t^2]^{z/2}} \, \dfrac{1}{e^{2\pi t} - 1},     \lb{A.29} \\ 
& \hspace*{6.5cm} z \in \bbC \backslash\{1\}, \quad \text{\cite[p.~279]{WW86}}.   \no   
\end{align}

{\bf Specific values:}
\begin{align}
\zeta (2n) = \f{(-1)^{n + 1}(2 \pi)^{2n}B_{2n} }{2 (2n)!}, \quad n \in \bbN_0,      \lb{A.30a}
\end{align}
where  $B_m$ are the Bernoulli numbers generated, for instance, by
\begin{equation}
\f{w}{e^{w} - 1} = \sum_{m \in \bbN_0} B_m \f{w^m}{m!}, \quad w \in \bbC, \, |w| < 2 \pi,
\end{equation}
in particular,
\begin{align}
& B_0 = 1, \; B_1 = - 1/2, \; B_2 = 1/6, \; B_3 = 0, \; B_4 = - 1/30, \; B_5 = 0, \; B_6 = 1/42, \;
\text{etc.,}  \\
& B_{2k+1} = 0, \; k \in \bbN.     \lb{A.33} 
\end{align}
Moreover, one has the {\bf generating functions} for $\zeta(2n)$,
\begin{align}
- (\pi z/2) \cot(\pi z) &= \sum_{n \in \bbN_0} \zeta(2n) z^{2n}, \quad |z| < 1, \; \zeta(0) = -1/2,
\lb{A.29a} \\
 - (\pi z/2) \coth(\pi z) &= \sum_{n \in \bbN_0} (-1)^n \zeta(2n) z^{2n}, \quad |z| < 1, \; \zeta(0) = -1/2,
 \lb{A.30}
\end{align}
and \cite{Wi2}
\begin{equation}
(n!/6)[\zeta(n - 2) - 3 \zeta(n - 1) + 2 \zeta(n)] = \int_0^{\infty} dt \, \f{t^n e^t}{(e^t - 1)^4}, \quad
n \in \bbN, \, n \geq 4.
\end{equation}
Choosing $k = 2n$, $n \in \bbN$, even,  employing \eqref{A.30a} for $\zeta(2n), \zeta(2n - 2)$, yields
a formula for $\zeta(2n - 1)$. Moreover, 
\begin{align}
\zeta(2n+1) &= \f{1}{(2n)!} \int_0^{\infty} dt \, \f{t^{2n}}{e^t - 1}, \quad n \in \bbN \\
&= \f{(-1)^{n + 1} (2 \pi)^{2n + 1}}{2 (2n + 1)!} \int_0^1 dt \, B_{2n + 1}(t) \cot(\pi t), \quad n \in \bbN,
\quad \text{\cite{DM09}},
\end{align}
where $B_m(\, \cdot \,)$ are the Bernoulli polynomials,
\begin{equation}
B_m(z) = \sum_{j = 0}^m \begin{pmatrix} m \\ j \end{pmatrix} B_j z^{m - j}, \quad t \in \bbC,
\end{equation}
generated, for instance, by
\begin{equation}
\f{w e^{z w}}{e^{w} - 1} = \sum_{m \in \bbN_0} B_m(z) \f{w^m}{m!}, \quad w \in \bbC, \,  |w| < 2 \pi.
\end{equation}
Explicitly,
\begin{align} 
\begin{split} 
& B_0(x) = 1, \;  B_1(x) = x - (1/2), \; B_2(x) = x^2 - x + (1/6),    \\
& B_3(x) = x^3 - (3/2)x^2 + (1/2)x, \; \text{etc.,}     \lb{A.35} 
\end{split} \\
& B_n(0) = B_n, \; n \in \bbN, \quad B_1(1) = - B_1 = 1/2, \;
B_n(1) = B_n, \; n \in \bbN_0 \backslash \{1\},     \lb{A.36} \\
& B_n'(x) = n B_{n-1}(x), \quad n \in \bbN, \; x \in \bbR.    \lb{A.37}
\end{align}

In addition, for $n \in \bbN$,
\begin{align}
\zeta(2n+1) &= \f{a^2 (2a)^{2n}}{[2^{- 2n- 1} -1]} \f{1}{(2n+1)!} \int_0^{\infty} dt \, t^{2n + 1}
\f{\cosh(at)}{[\sinh(at)]^2}, \quad a \neq 0, \quad \text{\cite[3.5279]{GR80}}  \\
&= \f{2^{2n}}{\big[2^{2n} - 1\big]} [(2n)!]^{-1} \int_0^1 dt \, \f{[\ln(t)]^{2n}}{1 + t},
\quad \text{\cite[4.2711]{GR80}}    \\
&= \f{2^{2n + 1}}{\big[2^{2n + 1} - 1\big]} [(2n)!]^{-1} \int_0^1 dt \, \f{[\ln(t)]^{2n}}{1 - t^2},
\quad \text{\cite[4.2711]{GR80}},   \\
\zeta(n) &= [(n-1)!]^{-1} \int_0^1 dt \, \f{[\ln(1/t)]^{n-1}}{1 - t},  \quad \text{\cite[4.2729]{GR80}}.
\end{align}

Just for curiosity,
\begin{equation}
\zeta(3) = 1.2020569032 .....
\end{equation}
Apery \cite{Ap79} proved in 1978 that $\zeta(3)$ is irrational (see also Beukers \cite{Be79}, 
van der Poorten \cite{Po79}, Zudilin \cite{Zu02},
and \cite{Wi1}, \cite{Wo1}).

Moreover, 
\begin{align}
\zeta(3) &= \sum_{k \in \bbN} k^{-3} = \f{8}{7} \sum_{k\in\bbN_0} (2k + 1)^{-3}
= \f{4}{3} \sum_{k \in \bbN_0} (-1)^k (k + 1)^{-3}, \quad \text{\cite{Wi1}}   \\
&= \f{1}{2} \int_0^{\infty} dt \, \f{t^2}{e^t - 1}, \quad \text{\cite{Wi1}}    \\
&= \f{2}{3}  \int_0^{\infty} dt \, \f{t^2}{e^t + 1}, \quad \text{\cite{Wi1}}    \\
&= \f{4}{7} \int_0^{\pi/2} dt \, t \ln([1/\cos(t)] + \tan(t)), \quad \text{\cite{Wi1}}    \\
&= \f{8}{7}\bigg[\f{\pi^2 \ln(2)}{4} + 2 \int_0^{\pi/2} dt \, t \ln(\sin(t)) \bigg],  \quad \text{\cite{Wo1}} \\
&= - \f{1}{2} \int_0^1 \int_0^1 dxdy \, \f{\ln(xy)}{1 - xy}, \quad \text{\cite{Be79}}   \\
&= \int_0^1 \int_0^1 \int_0^1 dxdydz \, \f{1}{1 - xyz}, \quad \text{\cite{Wi1}}    \\
&= \pi \int_0^{\infty} dt \, \f{\cos(2 \arctan(t))}{(1+t^2) [\cosh(\pi t / 2)]^2}, \quad \text{\cite{Wi1}}    \\
&= \f{8 \pi^2}{7} \int_0^1 dt \, \f{t(t^4 - 4 t^2 + 1) \ln(\ln(1/t))}{(1 + t^2)^{4}}, \quad \text{\cite{Wi1}}  \\
&= \f{8 \pi^2}{7} \int_1^{\infty} dt \, \f{t(t^4 - 4 t^2 + 1) \ln(\ln(t))}{(1 + t^2)^{4}}, \quad \text{\cite{Wi1}}    
\end{align}

\begin{align} 
\zeta(3) &= 10 \int_0^{1/2} dt \, \f{[\arcsinh(t)]^2}{t} \quad \text{\cite[p.~46]{Fi03}}   \\
& = (2/7) \pi^2 \ln(2) + (4/7) \int_0^{\pi} dt \, t \ln(\sin(t/2)) \quad \text{\cite[p.~46]{Fi03}}   \\
&= (2/7) \pi^2 \ln(2) -(8/7) \int_0^1 dt \, \f{[\arcsin(t)]^2}{t} \quad \text{\cite[p.~46]{Fi03}}   \\
&= (2/7) \pi^2 \ln(2) -(8/7) \int_0^{\pi/2} dt \, t^2 \cot(t)   \quad \text{\cite[p.~46]{Fi03}}   \\
&= -\dfrac{2}{7} \pi^2 \ln(2) - \dfrac{16}{7} \, \int_0^1 dt \, \dfrac{\arcth(t) \ln (t)}{t(1-t^2)}  \lb{A.64} \\
&= - \dfrac{4}{3} \, \int_0^1 dt \, \dfrac{\ln(t) \ln(1+t)}{t}    \\
& = - 8 \, \int_0^1 dt \, \dfrac{\ln(t) \ln(1+t)}{1+t}    \\
&= \int_0^1 dt \, \dfrac{\ln(t) \ln(1-t)}{1-t} = \int_0^1 dt \, \dfrac{\ln(t) \ln(1-t)}{t}     \\
&= \dfrac{1}{4} \pi^2 \ln(2) + \int_0^1 dt \, \dfrac{\ln(t) \ln(1+t)}{1-t}      \\
&= \dfrac{2}{13} \pi^2 \ln(2) + \dfrac{8}{13} \, \int_0^1 dt \, \dfrac{\ln(t) \ln(1-t)}{1+t}     \\
&= \dfrac{2}{7} \, \int_0^{\pi/2} dt \, \dfrac{t (\pi - t)}{\sin(t)}.     \lb{A.69}
\end{align}
Formulas \eqref{A.64}--\eqref{A.69} were provided by Glasser and Ruehr and can be found in 
\cite[Problem 80-13]{Kl90}. Finally, we also recall, 
\begin{align}
\zeta(3) &= 1 + \int_0^\infty dt \, \dfrac{6t-2t^3}{(1+t^2)^3} \, \dfrac{1}{e^{2\pi t} - 1}, 
\quad \text{\cite[p.~274]{He88}}  \\
&= \dfrac{6}{7} + \dfrac{2}{7} \,\int_0^\infty dt \, \dfrac{\sin(3 \arctan(2t))}{[(1/4) + t^2]^{3/2}} 
\, \dfrac{1}{e^{2\pi t} - 1}    \lb{A.71} \\
&= \dfrac{6}{7} + \dfrac{8}{7} \,\int_0^\infty dt \, \dfrac{\sin(3 \arctan(t))}{(1 + t^2)^{3/2}} 
\, \dfrac{1}{e^{\pi t} - 1}     \lb{A.72} \\
&= 2 - 8 \,\int_0^\infty dt \, \dfrac{\sin(3 \arctan(t))}{(1 + t^2)^{3/2}} \, \dfrac{1}{e^{\pi t} + 1}     \lb{A.73} \\
&= 1 + 2 \,\int_0^\infty dt \, \dfrac{\sin(3 \arctan(t))}{(1 + t^2)^{3/2}} \, \dfrac{1}{e^{2\pi t} - 1}.   \lb{A.74}
\end{align}
Formulas \eqref{A.71}--\eqref{A.73} are due to Jensen (1895) and are special cases of results to be 
found in \cite[p.~279]{WW86} (cf.\ \eqref{A.16}, \eqref{A.29}); finally, \eqref{A.74} is a consequence of \eqref{A.72} and \eqref{A.73}. 

For more on $\zeta(3)$ see also \cite[p.~42--45]{Fi03}.

For a wealth of additional formulas, going beyond what is recorded in this appendix, we also refer to 
\cite{Mi13} and \cite{Se19}.

\noindent {\bf Acknowledgments.} 
We are indebted to the anonymous referee for kindly bringing references \cite{Mi13} and \cite{Se19} to our attention. 


\end{document}